\documentclass[12pt]{article}
\usepackage{amsmath}
\usepackage{mathrsfs} 
\usepackage{amsthm}
\usepackage{amsfonts}
\usepackage{amssymb}
\usepackage{latexsym}
\usepackage{mathtools}
\usepackage{yfonts}
\usepackage{tikz} 
\usepackage{esint} 
\usepackage{tikz}
\usepackage{tikz-cd}

\usepackage[matrix,tips,graph,curve]{xy}

\linespread{1.065}

\makeatletter

\setlength\@tempdima  {5.5in}
\addtolength\@tempdima {-\textwidth}
\addtolength\hoffset{-0.6\@tempdima}
\setlength{\textwidth}{5.5in}
\setlength{\textheight}{8.75in}
\addtolength\voffset{-1.2in}

\makeatother

\makeatletter 
\@addtoreset{equation}{section}
\makeatother

\theoremstyle{plain}
\newtheorem{theorem}[equation]{Theorem}
\newtheorem{corollary}[equation]{Corollary}
\newtheorem{lemma}[equation]{Lemma}

\theoremstyle{definition}
\newtheorem{definition}[equation]{Definition}

\newtheorem{remark}[equation]{Remark}
\newtheorem{property}[equation]{Property}

\newtheorem{notation}[equation]{Notation}


\newcommand{\IC}{\mathbb{C}}

\newcommand{\IR}{\mathbb{R}}

\newcommand{\IZ}{\mathbb{Z}}
\newtheorem*{definition-non}{definition}














\def\d/{/\mspace{-6.0mu}/}







\begin{document}
\title{ An estimate on energy of min-max Seiberg-Witten Floer generators}
\author{Weifeng Sun}
\date{}
\maketitle 

~\\

In \cite{huchings}, Cristofaro-Gardiner, Hutchings and Ramos proved that embedded contact homology (ECH) capacities can recover the volume of a contact 3-manifod. In particular, a certain sequence of ratios constructed from ECH capacities, indexed by positive integers, was shown to converge to the volume in the index $k \rightarrow +\infty$ limit. There were two main steps in \cite{huchings} to proving this theorem: The first step used estimates for the energy of min-max Seiberg-Witten Floer generators to see that the $k \rightarrow +\infty$ limit of the ratios was a lower bound for the volume. The second step used embedded balls in a certain symplectic four manifold to prove that the $k \rightarrow \infty$ limit of the ratios was an upper bound.\\

Stronger estimates on the energy of min-max Seiberg-Witten Floer generators are derived in this paper that give an effective bound for finite index $k$ on the norm of the difference between the ECH ratio at index $k$ and the volume. This bound implies directly (by taking $k \rightarrow \infty$ ) the theorem in \cite{huchings} that ECH capacities recover volume.\\

Section 1 and Section 2 introduce the notation used in this paper and set the background for the main theorem. Section 3 and Section 4 prove the paper’s main theorem. Section 5 is an addendum that talks about the Seiberg-Witten Floer min-max generators.\\

~\\

~\\

~\\

~\\

~\\

\tableofcontents~\\

~\\

~\\

~\\

~\\

~\\

\section{Notations and prior knowledge}~\\

This section introduces the notations that will be used and reviews some of the
background about the Seiberg-Witten Floer homology.\\

\subsection{Seiberg-Witten equations}~\\

Suppose $(Y, \lambda)$ is a closed, connected, smooth three manifold with a contact form $\lambda$, and is also equipped with a metric satisfying :$|\lambda|=1$, $|d\lambda|=2$, $\textnormal{Vol}(Y)= \dfrac{1}{2}\lambda \wedge d\lambda$. Thus, $*\lambda = \dfrac{1}{2} d\lambda,~~ *d\lambda = 2\lambda. $\\

Choose a $\textnormal{Spin}_{\IC}$ structure on $Y$ with spinor bundles $S$. A connection on $S$ compatible with the metric on $Y$ is uniquely determined by its induced connection on $\det{S}$. Let $A$ denote a connection on $\det{S}$, and $D_A$ denote its Dirac operator on $S$. Moreover, choose a fiducial connection $A_0$ and write $A=A_0+2a$. \\  

This paper will only consider the case when $c_1(\det{S})$ is torsion.\\

Let $\psi$ denote a section of $S$. A pair $(a, \psi)$ is called a ``configuration", typically denoted by $c$. When $\psi= 0$, it is called reducible, otherwise irreducible.\\

\begin{definition}

In \cite{Weinstein}, Taubes considered a perturbed version of Seiberg-Witten equations:\\

\begin{equation}\label{sw1}
    *da=r(\psi ^+\iota \psi -i\lambda)+*d\mu + \mathfrak{T}(a,\psi),
\end{equation}

\begin{equation}\label{sw2}
  2r D_A\psi =2r D_{A_0+2a}\psi=\mathfrak{S}(a,\psi).
\end{equation}\\
Here $\mu$ is a one form, $ \mathfrak{T}, \mathfrak{S}$ are perturbations. Moreover, suppose $P$ is the big Banach space of tame perturbations created in \cite{bible}. ${\mathfrak{T},\mathfrak{S}}$ can be chosen to be the gradient of some $g\in P$ with $\|g\|_P$ bounded.\\

\end{definition} 

\begin{notation}
Let $(SW)_{r,e_{\mu}+g}$ denote the equations (\ref{sw1}), (\ref{sw2}).\\
$(SW)_{r,e_{\mu}}$ and $(SW)_{r}$ means $``g=0"$ and $``g=0, \mu=0"$ versions respectively.\\
Let $N_{r,e_{\mu}+g}$ denote the set of all solutions to $(SW)_{r,e_{\mu}+g}$.\\
\end{notation}

\begin{definition}\label{done}

The book \cite{bible} used $N_{r,m_{\mu}+g}$ (for a generic $g$) to define a ``from" version of Seiberg-Witten Floer homology $\widehat{HM}_{-k}(Y)_{r,e_{\mu}+g}$ (where $-k$ is the degree). It is not necessary to recall the full definition here, but the following is what is needed to know:\\

(1) For any $r$ and $\mu$, $\widehat{HM}_{-k}(Y)_{r,e_{\mu}+g}$ is only defined for a generic $g$. However, for different $(r,\mu, g)$ and $(r',\mu',g')$ where it is defined, there is a canonical isomorphism $T: \widehat{HM}_{-k}(Y)_{r,e_{\mu}+g} \rightarrow \widehat{HM}_{-k} (Y)_{r',e_{\mu'}+g'}$ to identify them, so one can talk about $\widehat{HM}_{-k}(Y)$ without referring to $(r, \mu, g)$. I will discuss more about the isomorphism $T$ later.\\

(2) The generators of the complex used to define $\widehat{HM}_{-k}(Y)_{r,e_{\mu}+g}$ are of two sorts: Each reducible solution contributes infinitely but countably many generators with different degrees which are bounded from above but not from below (I will talk more about this later); Each irreducible solution contributes only one generator with a unique degree, which is also called the degree of the irreducible solution. \\

(3) Fix $k, \mu$ and suppose $\|g\|_P$ is small and bounded (and $g$ is generic), then for $r$ large enough, all generators are contributed by irreducible solutions (see \cite{Weinstein}).\\

(4) Fix $r, \mu, g $ ($g$ generic), for $k$ large enough, all generators are contributed by reducible solutions (this is because there are only finitely many irreducible solutions for fixed $(SW)_{r,e_{\mu}+g}$, see \cite{Weinstein}).\\

(5) This paper only cares about  the mod 2 homologies. And it always regards gauge equivalence configurations as the same thing (for example, a sequence of configurations converges to another means they converge modulo gauge equivalence).\\

\end{definition}

\subsection{Functionals $\mathfrak{a}, cs$ and $E$}~\\

\begin{definition}
Given a configuration $c=(a,\psi)$, its $\mathfrak{a}, cs$ and $E$ are defined as:

$$E=i\int_Y \lambda \wedge (da+*\bar{\omega}_K),~~~~~~cs=-\int_Y a\wedge da,  $$

$$\mathfrak{a}=\dfrac{1}{2}(cs-rE)+r\int_Y<D_{A_0+a}\psi, \psi>. $$

$\bar{\omega}_K$ here is a balanced term, whose definition refers to (2.3) of \cite{Weinstein} and is omitted here. Since $i\int_Y \lambda \wedge *\bar{\omega}_K$ is just a constant, it is not important when doing estimates in this paper.\\

Moreover, when $\mu$ and $g\in P$ are also chosen, one can define:

$$ e_{\mu}= i\int_Y \mu \wedge da,$$

and

$$\mathfrak{a}_{\mu}=\mathfrak{a}+e_{\mu}, ~~~~~~ \mathfrak{a}_{\mu,g}=\mathfrak{a}+e_{\mu}+g.$$

Then $(SW)_{r,e_{\mu}+g}$ is equivalent to the assertion that $\nabla \mathfrak{a}_{\mu,g}(a,\psi)=0$.\\

\end{definition}

\section{Introduction to the main conclusions}~\\

\subsection{Actions on min-max generators}~\\

Fix a homology class from $\widehat{HM}_{-k}(Y)$, denoted by $\{\sigma\}$. In \cite{Weinstein},  the ``min-max" generators were defined in $\widehat{HM}_{-k}(Y)_{r,e_{\mu}}$ for the class $\{\sigma\}$ when  $\mu$ is generic and $r$ is large. \\

However, in this paper, \textbf{the min-max generator for $r\geq 0, \mu$, and $\{\sigma\}$} , denoted as $\hat{c}(r)_{\mu}$, is slightly different from that in \cite{Weinstein}, and thus carries more features. Its definition defers to Section \ref{mm} (lemma \ref{min max generator in section 5}).\\

Now, suppose $\hat{c}(r)_{\mu}$ is given, then there is a number $r_k$ related to it:

\begin{definition}\label{rk}
Suppose $\{\sigma\}$ is fixed and has degree $-k$ with $k$ large, $\mu$ is chosen to be generic, then

$$ r_k=\inf\limits\{s\geq 1 |~ \hat{c}(r)~ \textnormal{is irreducible whenever}~r>s \}. $$~\\
 
\end{definition}

\begin{remark}
(1) In fact, $r_k$ depends on not only $k$, but also $\{\sigma\}$ and $\hat{c}(r)$ (see Section 5 for details). However, $\{\sigma\}$ is assumed to be fixed and $\hat{c}(r)$ is assumed to be chosen a prior, so they are not indicated in the notation $r_k$.\\

(2) $r_k$ is finite. This will be explained in Subsection  \ref{subsection of upper bound}, or see \cite{Weinstein}.\\

(3) When $r>r_k$, $\hat{c}(r)$ must be irreducible. When $r\leq r_k$, $\hat{c}(r)$ can be either reducible or irreducible. However, there exists a nondecreasing sequence $0< s_1\leq s_2\leq \cdots $ with $\lim\limits_{j\rightarrow \infty} s_j=r_k$ such that $\hat{c}(s_j)  ~(j=1,2,3,\cdots$) are all reducible.\\
\end{remark}

Property \ref{min-max} below lists all the features of $\hat{c}(r)_{\mu}$ that will be needed in this paper.\\

\begin{property}\label{min-max}

(1) For any $\mu$, the action $\hat{\mathfrak{a}}(r)=\mathfrak{a}_{r,e_{\mu}}(\hat{c}(r))$  is a continuous function of $r\geq 1$.\\

(2) For a generic $\mu$, when $r>r_k$, $\hat{\mathfrak{a}}(r)$ is continuous and piecewise differentiable. Its differential, $ \dfrac{d\hat{\mathfrak{a}}(r)}{dr}$, is equal to $-\dfrac{1}{2}\hat{E}(r)$, where $\hat{E}(r)=E(\hat{c}(r))$ is only piecewise continuous.\\

(3) Suppose $\mu$ is bounded, $\textnormal{degree}{\{\sigma\}}= -k$ and $k$ is large enough, then $\hat{c}(2)$ is reducible.\\

(4) Although $\hat{c}(r)$ is called min-max generator for convenience, it might not be an actual min-max component of the homology $\{\sigma\}$. In fact, in \cite{Weinstein}, Taubes's min-max generator (denoted as $\hat{c}_T(r)$ here) is an actual min-max component of $\{\sigma\}$. However, his $\hat{c}_T(r)$ is only defined when $r\in U$, where $U$ is an open dense subset of $(r_k, \infty)$. The crucial relationship between $\hat{c}_T(r)$ and $\hat{c}(r)$ is

\begin{equation}\label{buchong}
\lim\limits_{r\in U, r\rightarrow \infty} |E(\hat{c}_T(r))-E(\hat{c}(r))| = 0.
\end{equation}

All the features above will be illustrated in Section \ref{mm}.\\

\end{property}

\begin{remark}

(1) $\hat{c}(r)$ itself is not uniquely determined. But any choice obeying the requirements (1)-(4) of the definition
will suffice.\\

(2)  Even when $r\in U$, $\hat{c}_T(r)$ might be different from $\hat{c}(r)$. ~~(If you see these definitions carefully, $\hat{c}(r)$ and $\hat{c}_T(r)$ are not even solutions to the same Seiberg-Witten equation. This is because Taubes used some extra r-dependent perturbation of Seiberg-Witten equation to define $\hat{c}_T(r)$, see part (d) of Section 3 of \cite{Weinstein}.) The reason that $\hat{c}(r)$ is still useful is the identity (\ref{buchong}).\\

(3) Although $\hat{\mathfrak{a}}(r)$ is a continuous function of $r$, $\hat{c}(r)$ might not be continuous. In fact, $\hat{c}(r)$ is only piecewise continuous when $\mu$ is generic and $r>r_k$ (see Subsection \ref{piecewise continuity} for details).\\

(4) Usually $\{\sigma\}\in \widehat{HM}_{-k}(Y)$ is chosen a prior and then not mentioned subsequently.\\

\end{remark}

\subsection{The key estimate on the energy}~\\

\begin{remark}
In this paper, $C$ always means some big enough positive constant which is independent with $r,k$, but can have different values in different formulas.  Plus, $O(\cdots)$ means absolutely smaller than $C\cdot (\cdots)$.\\
\end{remark}

Granted the definitions in 2.1, here is the main theorem:\\

\begin{theorem}\label{main theorem}\label{pre rk}
Suppose that $k$ is a large integer, $\textnormal{degree} \{\sigma\}=-k$; and that $\mu$ is chosen to be generic and have small norm. There exists $r_k \geq 2$ (see definition \ref{rk}) such that if $r>r_k$, then

$$|\dfrac{\hat{E}(r)^2}{8\pi^2k}-{Vol(Y)}| = O(k^{-\frac{1}{126}}). $$~\\

\end{theorem}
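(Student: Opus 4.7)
The plan is to exploit the ODE $\hat{\mathfrak{a}}'(r) = -\tfrac{1}{2}\hat{E}(r)$ of Property \ref{min-max}(2) in conjunction with explicit anchor values coming from the reducible regime. Since $\hat{c}(2)$ is reducible by Property \ref{min-max}(3), the action $\hat{\mathfrak{a}}(2)$ can be computed essentially explicitly in terms of $k$ and $r = 2$ via the Chern--Simons/spectral-flow description of reducible solutions of degree $-k$. Integrating then yields
\begin{equation*}
\hat{\mathfrak{a}}(r) \;=\; \hat{\mathfrak{a}}(2) \;-\; \tfrac{1}{2}\int_2^r \hat{E}(s)\,ds,
\end{equation*}
which reduces the theorem to controlling $\hat{E}$ inside the integral.

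The core of the proof will be two-sided bounds on $\hat{E}(r)$. For the upper bound I would pair (\ref{sw1}) with $\lambda$ and invoke Taubes' pointwise bound on $|\psi|^2$, yielding $\hat{E}(r) \leq 2\pi \sqrt{2k\,\mathrm{Vol}(Y)}(1 + \epsilon(r,k))$ with an effective $\epsilon$. For the matching lower bound I would use the continuity of $\hat{\mathfrak{a}}$ at $r = r_k$ (Property \ref{min-max}(1)) together with the nondecreasing sequence $s_j \nearrow r_k$ along which $\hat{c}(s_j)$ is reducible (Remark after Definition \ref{rk}): this pins $\hat{\mathfrak{a}}(r_k)$ to a known reducible value, and when combined with the upper bound this forces a matching lower bound on $\hat{E}$. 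Identity (\ref{buchong}) provides the mechanism by which geometric estimates proved for Taubes' $\hat{c}_T(r)$ transfer to $\hat{c}(r)$ without additional penalty.

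With the two-sided bounds in place, the quadratic relation $\hat{E}(r)^2 \approx 8\pi^2 k\,\mathrm{Vol}(Y)$ should follow from the decomposition $\mathfrak{a}(c) = \tfrac{1}{2}(cs - rE) + r\int_Y \langle D_{A_0+a}\psi,\psi\rangle$. The Dirac term is $O(\text{small})$ on any solution by (\ref{sw2}), and $cs$ evaluated on $\hat{c}(r)$ is $\approx 4\pi^2 k$ by the standard spectral-flow computation matching the degree $-k$. Rearranging gives $r\hat{E}(r) \approx 4\pi^2 k - 2\hat{\mathfrak{a}}(r) + \text{error}$, and combining this with the $\sqrt{k}$-scale upper bound on $\hat{E}$ yields $\hat{E}(r)^2/(8\pi^2 k) - \mathrm{Vol}(Y) = \text{effective error}$.

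The main obstacle is producing the specific exponent $1/126$. That value is not intrinsic to the geometry; rather, it emerges by balancing several power-counting errors: the pointwise $|\psi|^2 \leq 1 + O(r^{-1})$ error in the upper bound, the error in the spectral-flow/degree formula for $cs$, the variation of $\hat{E}(r)$ across the integration window used to replace $\int \hat{E}$ by $r\hat{E}(r) + \text{lower order}$, and the finite-$r$ rate in (\ref{buchong}). Each of these carries a power of $k$ (and of $r$), and the integration scale $r$ itself must be chosen --- typically like $k^{\alpha}$ for some small $\alpha$ --- to equalize all four simultaneously. The combinatorics of this optimization is what produces $1/126$; I would expect Section 3 to assemble the individual inequalities and Section 4 to perform the balancing.
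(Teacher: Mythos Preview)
Your sketch identifies several correct ingredients --- the ODE $\hat{\mathfrak{a}}'(r)=-\tfrac12\hat{E}(r)$, the spectral-flow identity $cs\approx 4\pi^2k$, and the reducible anchor near $r_k$ --- but it has a genuine gap at the point where you try to close the loop. You write that $r\hat{E}(r)\approx 4\pi^2k-2\hat{\mathfrak{a}}(r)$ and propose to compute $\hat{\mathfrak{a}}(r)$ by integrating $-\tfrac12\hat{E}$ from $r=2$. This is circular: to integrate you need $\hat{E}(s)$ for all $s\leq r$, which is precisely what you are trying to estimate. Your proposed resolution, ``choose $r\sim k^{\alpha}$ to balance the errors,'' cannot work here because the theorem asserts the bound for \emph{every} $r>r_k$, not for a single well-chosen $r$; and your claimed uniform upper bound $\hat{E}(r)\leq 2\pi\sqrt{2k\,\mathrm{Vol}(Y)}(1+\epsilon)$ is essentially the conclusion, not something that follows from pairing (\ref{sw1}) with $\lambda$ (that only gives $E\leq r\,\mathrm{Vol}(Y)+C$, which grows with $r$).

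The paper breaks the circularity by a bootstrap. First it pins down $r_k$ itself on both sides, obtaining $r_k^2=\dfrac{8\pi^2k}{\mathrm{Vol}(Y)}+O(k^{32/33})$; this fixes $\hat{\mathfrak{a}}(r_k)$ via the reducible formula. Then it introduces the auxiliary variables $y_1=(\hat{cs}-4\pi^2k+2\hat{e}_\mu)/r$ and $y_2=(\hat{cs}+2\hat{e}_\mu)/r$, for which $\hat{E}-y_i$ is continuous with $\dfrac{d(\hat{E}-y_i)}{dr}=\dfrac{y_i}{r}$. The bound $|y_1|\leq Cr^{15/16}$ (from (\ref{sfe})) and the initial value at $r_k$ give a first crude estimate $\hat{E}(r)=r_k\,\mathrm{Vol}(Y)+O(r^{31/33})$. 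Then the nonlinear bound $|y_2|\leq Cr^{-1/3}\hat{E}^{4/3}$ (from (\ref{cse})) is iterated: each pass replaces an error $O(r^{\delta})$ by $O(r^{(4\delta-1)/3})$, and after finitely many steps the $r$-dependent part disappears, leaving $\hat{E}(r)=r_k\,\mathrm{Vol}(Y)+O(r_k^{62/63})$ uniformly in $r$. Squaring and inserting the $r_k$ estimate yields $125/126$, hence the exponent $1/126$. This iteration (Lemmas \ref{diedai}--\ref{niubi}) is the missing mechanism in your proposal; the exponent is produced by the arithmetic $\epsilon=\dfrac{4\delta}{3\delta+1}$ with $\delta=\tfrac{31}{33}$, not by a one-shot optimization over four competing errors.
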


The proof of Theorem \ref{main theorem} is in Section \ref{key}.\\

\subsection{Application on ECH capacities}~\\

Recall in \cite{huchings} that $\lim\limits_{r\in U, r\rightarrow+\infty}E(\hat{c}_T(r))=2\pi c_{\sigma}(Y,\lambda)$, $c_{\sigma}(Y,\lambda)$ is the ECH capacity. Notice, here $\{\sigma\}$ should be understood as a Seiberg-Witten Floer cohomology class, and $\hat{c}_T(r)$ should be understood as the min-max generator of Seiberg-Witten Floer cohomology. However, the estimates in paper are still valid for Seiberg-Witten Floer cohomology, so it is safe to use the same notation as for Seiberg-Witten Floer homology elsewhere. So, together with formula (\ref{buchong}), here is a corollary:\\

\begin{corollary}\label{big corollary}~~~~~~
$|\dfrac{c_{\sigma}(Y,\lambda)^2}{2k} -\textnormal{Vol(Y)}| = O(k^{-\frac{1}{126}}).$\\

Moreover, suppose $\{\sigma_m\} (m=0,1,2,3,\cdots)$ is a certain sequence of ECH classes, whose $m$th term has a degree $k_m\in \IZ$ and $\lim\limits_{m\rightarrow +\infty}k_m = +\infty$. (In fact, ``certain sequence" here means $c_1(S)$ is torsion, so that $k_m$ can be defined as an integer, though the way to define it is not unique. This is the case correspondence to the Seiberg-Witten cohomology discussed in this paper. See \cite{huchings} for details.) Then,
$$\lim\limits_{m\rightarrow +\infty}\dfrac{c_{\sigma_m}(Y,\lambda)^2}{2k_m}=\textnormal{Vol(Y)}.$$\\
\end{corollary}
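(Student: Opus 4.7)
The plan is straightforward: the first assertion follows from passing to the limit $r\to\infty$ in Theorem \ref{main theorem} and identifying the limiting value of $\hat{E}(r)$ with $2\pi c_{\sigma}(Y,\lambda)$ using the two inputs quoted just before the corollary (the ECH identification $\lim_{r\in U,r\to\infty}E(\hat{c}_T(r))=2\pi c_{\sigma}(Y,\lambda)$ and the asymptotic closeness \eqref{buchong} between $\hat c(r)$ and $\hat c_T(r)$). The second assertion is an immediate consequence of the first, by applying it to each class $\{\sigma_m\}$ and letting $m\to\infty$.

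In more detail, I would fix the class $\{\sigma\}$ of degree $-k$ with $k$ large and choose a generic $\mu$ of small norm so that Theorem \ref{main theorem} applies. For any $r>r_k$ the theorem gives
\begin{equation*}
\hat{E}(r)^2 = 8\pi^2 k\,\textnormal{Vol}(Y) + O(k^{1-\frac{1}{126}}).
\end{equation*}
Next I would let $r\to\infty$ along the open dense subset $U\subset(r_k,\infty)$ on which Taubes's min-max generator $\hat{c}_T(r)$ is defined. Combining the ECH identification with \eqref{buchong}, it follows that
\begin{equation*}
\lim_{r\in U,\,r\to\infty}\hat{E}(r) = 2\pi c_{\sigma}(Y,\lambda).
\end{equation*}
Squaring, substituting into the displayed estimate and dividing by $8\pi^2 k$ yields $|c_{\sigma}(Y,\lambda)^2/(2k) - \textnormal{Vol}(Y)| = O(k^{-\frac{1}{126}})$, which is the first claim. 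For the second claim, I apply the first claim with $k=k_m$; the error bound $Ck_m^{-\frac{1}{126}}$ vanishes as $k_m\to\infty$, giving convergence of $c_{\sigma_m}^2/(2k_m)$ to $\textnormal{Vol}(Y)$.

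The main obstacle is really only bookkeeping. Because Theorem \ref{main theorem} is quantitative and uniform in $r>r_k$, the estimate survives any sequence $r\to\infty$, so the only real subtlety is confirming that one may evaluate both sides of the identity $\lim\hat{E}(r)=2\pi c_{\sigma}$ along the same sequence; this is immediate once one restricts to the dense set $U$ where $\hat{c}_T(r)$ is defined, together with piecewise continuity of $\hat{E}$ stated in Property \ref{min-max}(2). One also needs $E(\hat{c}(r))$ to remain genuinely close to $E(\hat{c}_T(r))$ in the limit, but this is exactly the content of \eqref{buchong}. No further analytic input is required beyond Theorem \ref{main theorem}.
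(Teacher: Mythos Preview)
Your proposal is correct and follows essentially the same route as the paper: the corollary is deduced from Theorem \ref{main theorem} by letting $r\to\infty$ along $U$, using the ECH identification $\lim_{r\in U,r\to\infty}E(\hat{c}_T(r))=2\pi c_{\sigma}(Y,\lambda)$ together with \eqref{buchong} to replace $\hat{E}(r)$ by $2\pi c_{\sigma}$, and then dividing through; the second assertion is, as you say, immediate from the first.
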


Notice, this gives a purely analytic proof of ``ECH capacities recover volume theorem" (see \cite{huchings}) with an estimate on the speed.\\

\begin{remark}

There might be a potential further application of this:\\
In \cite{dense}, Irie used ``ECH capacities recover the volume" theory to prove that on compact 3-manifold, Reeb orbits are dense for a generic contact form. As a corollary, on a compact 2-manifold, closed geodesics are dense for a generic metric. Corollary \ref{big corollary} might carry some hints to a quantitative estimate of the above theorem. (For example, as I suppose, on a 2-dimensional compact manifold with a given metric $g$, there seems to exist a metric $g_{\epsilon}$ such that $\|g_{\epsilon}-g\|\leq \epsilon$ and $g_{\epsilon}$ has a closed geodesic with length at most $C\epsilon^{-\delta}$, where $C$ is independent with $\epsilon$, and $\delta$ is independent with everything.)\\

\end{remark}

Now, let's start to do analysis!\\

\section{Some preliminary estimates}~\\

\subsection{Estimates from Taubes}\label{estimates from taubes}~\\

From \cite{Weinstein} and \cite{huchings}, many inequalities are obtained to be used. They are stated in the following:\\

\begin{lemma}\label{pre}
(1) Suppose $(a,\psi)$ is a solution to $(SW)_{r,e_{\mu}+g}$ ($g$ is generic, $\mu$ and  $g $ are bounded), then

\begin{equation}
E\leq r \textnormal{Vol}(Y)+C.
\end{equation}~\\

(2) If $(a,\psi)$ in (1) is irreducible and suppose its $E,r$ has a positive lower bound, then

\begin{equation}\label{sfe}
|cs+2e_{\mu}+2g-4\pi^2k|\leq C r^{\frac{31}{16}},
\end{equation}

\begin{equation}\label{cse}
|cs+2e_{\mu}+2g|\leq Cr^{\frac{2}{3}} E^{\frac{4}{3}}.
\end{equation}~\\

(3) If $(a,\psi)$ in (1) is reducible, and $r$ has a positive lower bound, then

\begin{equation}
cs+2e_{\mu}+2g= \dfrac{1}{2} r^2 \textnormal{Vol}(Y) + O(r),
\end{equation}

\begin{equation}\label{E estimate}
E= r \textnormal{Vol}(Y) + O(1),
\end{equation}

\begin{equation}\label{estimate of action of red}
\mathfrak{a}= -\dfrac{1}{4} r^2 \textnormal{Vol}(Y) + O(r).
\end{equation}\\

\end{lemma}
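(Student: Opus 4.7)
The plan is to recognize that this lemma consolidates estimates already proved in \cite{Weinstein} and \cite{huchings}; Parts (1) and (2) I would largely cite, while Part (3) I would derive from scratch since reducibility ($\psi=0$) trivializes the Seiberg--Witten equations. For Part (1), the starting point is the pointwise Weitzenb\"ock bound $|\psi|^2 \leq 1 + O(r^{-1})$, valid for solutions of the perturbed Seiberg--Witten equations (see \cite{Weinstein}). Pairing $\lambda$ against equation (\ref{sw1}) and using $E = i\int_Y \lambda \wedge da + \mathrm{const}$, one obtains after substitution $E = r\,\mathrm{Vol}(Y) - (\text{nonnegative term involving } |\psi|^2) + (\text{perturbation and balanced-term contributions})$; the first two terms are at most $r\,\mathrm{Vol}(Y)$ and the rest is bounded by a constant depending on $\|\mu\|_{C^0}$ and $\|g\|_P$.

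For Part (2), both inequalities are Taubes's. Estimate (\ref{sfe}) is the spectral-flow bound: the degree $-k$ equals the spectral flow of a family of perturbed Dirac operators, and \cite{Weinstein} sharply bounds $|cs + 2e_\mu + 2g - 4\pi^2 k|$ by $C r^{31/16}$ via a delicate eigenvalue-accumulation analysis. Estimate (\ref{cse}) follows from integration by parts in $cs = -\int a\wedge da$ combined with $L^p$-estimates on $a$ and $da$ derived from the SW equation in \cite{Weinstein}, re-expressed in terms of $E$ using the $L^2$-control $\int |da|^2 \lesssim rE$ that comes from pairing the SW equation with itself.

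For Part (3), reducibility turns (\ref{sw1}) into $*da = -ir\lambda + *d\mu + \mathfrak{T}(a,0)$, which via $*\lambda = \tfrac{1}{2}d\lambda$ yields $da = -\tfrac{ir}{2} d\lambda + O(1)$ pointwise, with the $O(1)$ depending on the fixed norms of $\mu$ and $g$. A Coulomb gauge choice then gives $a = -\tfrac{ir}{2}\lambda + O(1)$. Direct substitution into the definitions yields
\begin{align*}
E &= i\int_Y \lambda\wedge da + \mathrm{const} = \tfrac{r}{2}\int_Y \lambda\wedge d\lambda + O(1) = r\,\mathrm{Vol}(Y) + O(1),\\
cs &= -\int_Y a\wedge da = \tfrac{1}{2}r^2\,\mathrm{Vol}(Y) + O(r),\\
\mathfrak{a} &= \tfrac{1}{2}(cs - rE) = -\tfrac{1}{4}r^2\,\mathrm{Vol}(Y) + O(r),
\end{align*}
where the Dirac term in $\mathfrak{a}$ vanishes because $\psi = 0$; since $|e_\mu|, |g| = O(r)$ under the standing bounds on $\mu$ and $g$, the stated form of $cs + 2e_\mu + 2g$ also follows.

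The main obstacle is not conceptual but technical bookkeeping: one must verify that perturbation contributions from $\mu, g, \mathfrak{T}, \mathfrak{S}$, measured in the appropriate norms on the Banach space $P$ of tame perturbations from \cite{bible}, are consistently absorbed into the $C$ and $O(\cdot)$ constants across all three parts. The $r^{31/16}$ exponent in (\ref{sfe}) is particularly delicate, relying on Taubes's fine analysis of eigenvalue accumulation for the perturbed Dirac operator along the family, and I would simply quote the sharp form of his estimate rather than re-derive it.
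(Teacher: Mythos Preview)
Your proposal is correct and matches the paper's approach: the paper gives no proof at all for this lemma, simply stating that the inequalities are taken from \cite{Weinstein} and \cite{huchings}. Your plan to cite Parts (1) and (2) is exactly what the paper does, and your explicit computation for Part (3) in the reducible case (plugging $\psi=0$ into (\ref{sw1}) and using $*\lambda = \tfrac12 d\lambda$) is a straightforward and correct elaboration that the paper itself omits.
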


Moreover, a corollary can be derived from the above estimates which will be used later:\\

\begin{corollary}
$c$ is a irreducible solution to $(SW)_{r,e_{\mu}+g}$ with $r\geq 1$ and $\mu, g$ bounded, then

\begin{equation}\label{estimate of action on irr}
\mathfrak{a}_{r,e_{\mu}+g}(c) > 2\pi^2k-\dfrac{1}{2}r^2\textnormal{Vol(Y)}-Cr^{\frac{31}{16}}.\\
\end{equation}

\end{corollary}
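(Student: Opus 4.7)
The plan is to expand $\mathfrak{a}_{r,e_\mu+g}(c)$ according to its definition and then substitute the bounds already recorded in Lemma \ref{pre}. By definition,
$$\mathfrak{a}_{r,e_\mu+g}(c) = \tfrac{1}{2}cs - \tfrac{r}{2}E + e_\mu + g + r\int_Y \langle D_{A_0+a}\psi,\psi\rangle,$$
so the task is to bound each of the five terms on the right.

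First I would handle the Dirac term: since $c$ satisfies (\ref{sw2}), we have $2rD_{A_0+2a}\psi = \mathfrak{S}(a,\psi)$, and since the metric on $S$ couples the connection $A_0+a$ appropriately (modulo a term of the form $\langle a\psi,\psi\rangle$ that is controlled by the pointwise bound on $|\psi|$ from \cite{Weinstein}), the quantity $r\int_Y \langle D_{A_0+a}\psi,\psi\rangle$ is $O(1)$; this uses the standard $C^0$-bound $|\psi|\le 1+Cr^{-1}$ for solutions and the fact that $\mathfrak{S}$ is the gradient of a bounded tame perturbation. Next, apply inequality (\ref{sfe}) from Lemma \ref{pre}(2) in the form
$$cs \;\ge\; 4\pi^2 k - 2e_\mu - 2g - Cr^{\tfrac{31}{16}},$$
and inequality (1) from Lemma \ref{pre} (the upper bound $E\le r\mathrm{Vol}(Y)+C$) in the form
$$-\tfrac{r}{2}E \;\ge\; -\tfrac{r^2}{2}\mathrm{Vol}(Y) - \tfrac{C}{2}r.$$

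Substituting these into the expression for $\mathfrak{a}_{r,e_\mu+g}(c)$, the $e_\mu$ and $g$ contributions cancel exactly with those produced by the Chern--Simons estimate, leaving
$$\mathfrak{a}_{r,e_\mu+g}(c) \;\ge\; 2\pi^2 k - \tfrac{r^2}{2}\mathrm{Vol}(Y) - \tfrac{C}{2}r^{\tfrac{31}{16}} - \tfrac{C}{2}r + O(1).$$
Since $r\ge 1$, the lower-order terms $Cr$ and $O(1)$ can be absorbed into the $Cr^{31/16}$ term (after enlarging the constant $C$), giving the stated bound. The strict inequality $>$ rather than $\ge$ follows by yet another adjustment of the constant.

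The only non-routine point is verifying the claim that $r\int_Y\langle D_{A_0+a}\psi,\psi\rangle = O(1)$ at an irreducible solution. This is essentially a bookkeeping exercise using the pointwise Taubes estimates for $(a,\psi)$ recalled in \cite{Weinstein}, together with the assumption that $\|g\|_P$ is bounded; I do not expect a genuine obstruction here. The rest is simply linear combination of the estimates already displayed in Lemma \ref{pre}, and the decisive observation is the cancellation of the $e_\mu+g$ contributions between the two halves of the calculation, which is what makes the $r^{31/16}$ bound the dominant error term.
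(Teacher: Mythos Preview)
Your proof is correct and follows the same route as the paper's: combine (\ref{sfe}) with the upper bound $E\le r\,\mathrm{Vol}(Y)+C$ from Lemma~\ref{pre}(1), and observe that the $e_\mu+g$ contributions cancel. The one difference is in how the Dirac term is handled. The paper simply invokes the exact identity
\[
\mathfrak{a}_{r,e_\mu+g}(c)=\tfrac12(cs-rE)+e_\mu+g
\]
valid at any solution, so that no separate estimate on the Dirac integral is needed; you instead argue that this term is $O(1)$. Your route works in spirit, but the side term $\langle a\psi,\psi\rangle$ you mention is not obviously controlled by the pointwise bound on $|\psi|$ alone, since $a$ need not be pointwise bounded for large $r$ --- the cleaner identity above sidesteps this entirely. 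As a minor aside, the paper's own citation of (\ref{E estimate}) in its proof appears to be a slip (that estimate is for reducibles); your use of Lemma~\ref{pre}(1) is the correct reference.
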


\begin{proof}
This is a corollary directly from (\ref{sfe}),(\ref{E estimate}) and the fact that, for a solution,~~
$\mathfrak{a}_{r,e_{\mu}+g}(c)=\dfrac{1}{2}(cs-rE)+e_{\mu}+g.$\\
\end{proof}

\subsection{A lower bound of $r_k$}~\\

\begin{lemma}
When $\mu$ is bounded, and when $k$ is large, then $r_k\geq 2$.\\
\end{lemma}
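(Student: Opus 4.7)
The plan is to read this as a direct consequence of Property \ref{min-max}(3), which asserts that $\hat{c}(2)$ is reducible whenever $\mu$ is bounded and $k$ is sufficiently large. Once that fact is in hand, the lemma is essentially tautological from Definition \ref{rk}: if $r_k$ were strictly less than $2$, then by definition of the infimum, $\hat{c}(r)$ would have to be irreducible for every $r > r_k$, in particular for $r=2$, contradicting the reducibility of $\hat{c}(2)$.

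So the work to do is to justify why $\hat{c}(2)$ is reducible for large $k$. The key input is Definition \ref{done}(4): with $r,\mu,g$ fixed (and $g$ generic, $\|g\|_P$ small), the complex generating $\widehat{HM}_{-k}(Y)_{r,e_{\mu}+g}$ is, for all sufficiently large $k$, supported entirely on generators coming from reducible solutions. The reason is that the set of irreducible solutions of $(SW)_{r,e_\mu+g}$ at fixed $r$ is finite, so their degrees form a finite set, while reducibles contribute generators of arbitrarily negative degree. Apply this with $r=2$: the min-max generator $\hat{c}(2)_\mu$ for a class of degree $-k$ must be drawn from the complex's generators, and once $k$ exceeds the largest degree attained by an irreducible solution of $(SW)_{2,e_\mu+g}$, the only option for $\hat{c}(2)_\mu$ is a reducible one.

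The step that requires care is that the bound on $k$ needed to force reducibility of $\hat{c}(2)$ has to be uniform in the small-norm perturbations, so the conclusion $r_k \geq 2$ holds for the same range of $k$ for which Theorem \ref{main theorem} is stated. This is where one invokes the quantitative part of the results in \cite{Weinstein}: the number and degrees of irreducible solutions at $r=2$ are controlled uniformly once $\|\mu\|$ and $\|g\|_P$ are small and bounded. Combined with the continuity statement in Property \ref{min-max}(1) and the limiting behaviour in part (3) of the remark after Definition \ref{rk} (that one can approach $r_k$ from below through reducible values), this shows $r_k \geq 2$.

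I expect the substantive content is entirely in Property \ref{min-max}(3), whose proof is deferred to Section \ref{mm}; the lemma itself is then a one-line unwinding of Definition \ref{rk}. The only thing to be careful about is not circularly using $r_k \geq 2$ at any point of the min-max construction used to produce $\hat{c}(2)$ in the first place.
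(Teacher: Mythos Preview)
Your proposal is correct and matches the paper's approach exactly: the paper's proof is the single sentence ``This is just because of property (3) in property \ref{min-max},'' and your first paragraph unwinds precisely that. Your second paragraph (deriving Property \ref{min-max}(3) from Definition \ref{done}(4)) is also what the paper does, but later in Section \ref{mm}; the uniformity discussion in your third paragraph is extra caution the paper does not bother with, since at the fixed value $r=2$ the finiteness of irreducible solutions already gives a $k$-threshold depending only on bounded data.
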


\begin{proof}
This is just because of property (3) in property \ref{min-max}.\\
\end{proof}

\begin{theorem} Here is an estimate on $r_k$:\label{estimate on rk}
$$r_k^2 \geq  \dfrac{8\pi ^2 k}{ \textnormal{Vol(Y)}}-C k^{\frac{32}{33}}.$$~\\
\end{theorem}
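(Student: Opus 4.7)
The plan is to squeeze $r_k$ between the reducible and irreducible action formulas from Lemma 3.1 (and its Corollary (3.12)), using the continuity of $\hat{\mathfrak{a}}(r)$ in $r$ to stitch the two regimes together at the transition point $r=r_k$.

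First I would invoke Remark 2.4(3) to produce a nondecreasing sequence $s_j \nearrow r_k$ along which $\hat{c}(s_j)$ is reducible. Applying the reducible action formula (3.9) gives
$$\hat{\mathfrak{a}}(s_j) \;=\; -\tfrac{1}{4}\,s_j^{\,2}\,\textnormal{Vol}(Y) \;+\; O(s_j),$$
and because $\hat{\mathfrak{a}}$ is continuous in $r$ by Property \ref{min-max}(1), passing $j\to\infty$ yields $\hat{\mathfrak{a}}(r_k) = -\tfrac{1}{4}r_k^{\,2}\,\textnormal{Vol}(Y) + O(r_k)$. On the other hand, for every $r > r_k$ the generator $\hat{c}(r)$ is irreducible by the definition of $r_k$, so Corollary (3.12) applies and gives $\hat{\mathfrak{a}}(r) > 2\pi^2 k - \tfrac{1}{2}r^2\,\textnormal{Vol}(Y) - Cr^{31/16}$; letting $r\to r_k^{+}$ and invoking continuity once more produces
$$\hat{\mathfrak{a}}(r_k) \;\geq\; 2\pi^2 k \;-\; \tfrac{1}{2}\,r_k^{\,2}\,\textnormal{Vol}(Y) \;-\; C\,r_k^{31/16}.$$

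Equating the two expressions for $\hat{\mathfrak{a}}(r_k)$ and rearranging gives the implicit inequality
$$r_k^{\,2}\,\textnormal{Vol}(Y) \;\geq\; 8\pi^2 k \;-\; C'\,r_k^{31/16} \;-\; O(r_k).$$
The theorem then follows from a short case split that bypasses any need for an a priori upper bound on $r_k$. Either $r_k^{\,2}\geq 8\pi^2 k/\textnormal{Vol}(Y)$ and the claim is immediate; or $r_k^{\,2} < 8\pi^2 k/\textnormal{Vol}(Y)$, in which case the crude bound $r_k \leq C\sqrt{k}$ gives $r_k^{31/16}\leq C''\,k^{31/32}\leq C''\,k^{32/33}$ (since $31/32 < 32/33$ and $k\geq 1$), while the $O(r_k)$ term is bounded by $O(\sqrt{k})\leq O(k^{32/33})$. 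Combining these, the defect $8\pi^2 k/\textnormal{Vol}(Y) - r_k^{\,2}$ is controlled by $C\,k^{32/33}$.

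The conceptually delicate step, though not technically hard once Property \ref{min-max}(1) is in hand, is the use of continuity to identify the two one-sided limits of $\hat{\mathfrak{a}}(r)$ at $r_k$: the configuration $\hat{c}(r)$ itself jumps at $r_k$ (reducible on a sequence from below, irreducible strictly above), but the scalar action $\hat{\mathfrak{a}}(r)$ does not, and that single piece of regularity is exactly what allows the ``reducible" expression for $\hat{\mathfrak{a}}(r_k)$ to be compared with the ``irreducible" lower bound, forcing the quantitative lower bound on $r_k$.
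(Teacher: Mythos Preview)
Your proof is correct and uses the same key ingredients as the paper---the reducible action formula (3.9), the irreducible lower bound (3.12), and continuity of $\hat{\mathfrak{a}}(r)$---but the logical packaging differs. The paper argues by contradiction over an entire range: it shows that whenever $r^2 < \dfrac{8\pi^2 k}{\textnormal{Vol}(Y)} - Ck^{32/33}$ there is a positive gap between the action of \emph{any} irreducible and \emph{any} reducible solution, so by continuity $\hat{c}(r)$ cannot switch type; since $\hat{c}(2)$ is reducible (Property~\ref{min-max}(3)), $\hat{c}(r)$ stays reducible throughout that range. You instead go straight to the transition value $r_k$, using Remark~2.4(3) to approach it from the reducible side and the definition of $r_k$ to approach from the irreducible side, then compare the two expressions for $\hat{\mathfrak{a}}(r_k)$. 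Your route is arguably more direct (it avoids the intermediate ``gap for all $r$'' statement and the appeal to $\hat{c}(2)$), and it even yields the slightly sharper error $k^{31/32}$ before you relax it to $k^{32/33}$; the paper's route has the minor advantage of making explicit that the reducible regime is connected all the way down to $r=2$, a fact that is reused later in Theorem~\ref{final estimate of rk}.
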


\begin{proof}
If $r^2< \dfrac{8\pi ^2 k}{ \textnormal{Vol(Y)}}-Ck^{\frac{32}{33}}$, let $c_{irr}$, $c_{red}$ be any irreducible and reducible solutions to $(SW)_{r,e_{\mu}}$ respectively.\\

From (\ref{estimate of action on irr}) one gets
$$\mathfrak{a}_{r,e_{\mu}}(c_{irr})> 2\pi^2k-\dfrac{r^2}{2} \textnormal{Vol(Y)}-Cr^{\frac{31}{16}} >  -\dfrac{r^2}{4} \textnormal{Vol(Y)}+ Ck^{\frac{33}{34}}+O(r^{\frac{31}{16}})\geq - \dfrac{r^2}{4} \textnormal{Vol(Y)}+ Ck^{\frac{33}{34}}.$$

The last step is because $k=O(r^2)$, so $r^{\frac{31}{16}}=o(k^{\frac{33}{34}}).$\\

However, from (\ref{estimate of action of red}) one gets 
$$\mathfrak{a}_{r,e_{\mu}}(c_{red})< -\dfrac{1}{4}r^2 \textnormal{Vol(Y)}+ Cr <  - \dfrac{r^2}{4} \textnormal{Vol(Y)}+ Ck^{\frac{33}{34}} < \mathfrak{a}_{r,e_{\mu}}(c_{irr}).$$

Notice $\hat{\mathfrak{a}}(r)$ is continuous w.r.t. r, so when $1< r^2 < \dfrac{8\pi^2 k}{ \textnormal{Vol(Y)}}-Ck^{\frac{32}{33}}$, $\hat{c}(r)$ cannot shift between reducible and irreducible (since there is always a positive gap between their actions). \\

Since $\hat{c}(2)$ is reducible, so $\hat{c}(r)$ must be reducible as long as\\
$r^2< \dfrac{8\pi^2 k}{ \textnormal{Vol(Y)}}-Ck^{\frac{32}{33}}$ , which implies $r_k^2 \geq \dfrac{8\pi^2 k}{ \textnormal{Vol(Y)}}-Ck^{\frac{32}{33}}.$\\

\end{proof}

\begin{remark}
Before moving to the upper bound of $r_k$, I want to introduce a fake proof of theorem \ref{estimate on rk} (which confused me a lot before), which is incorrect but carries some hints and clarifications on what to expect:\\

Since $r_k$ is the borderline between where $\hat{c}(r)$ to be irreducible and reducible, it should satisfy all of (1) (2) (3) in lemma \ref{pre}, which implies
$$cs(\hat{c}(r_k))+2e_{\mu}(\hat{c}(r_k)) = \dfrac{1}{2} r_k^2 \textnormal{Vol(Y)}+O(r_k)=8\pi^2k+O(r_k^{\frac{31}{16}}).$$

Thus $r_k^2=\dfrac{8\pi^2 k}{\textnormal{Vol(Y)}}+O(k^{\frac{31}{32}})$.\\

This above argument is invalid because $cs(\hat{c}(r_k))+2e_{\mu}(\hat{c}(r_k)) $ is not continuous in general, and also because the spectral flow estimate,  i.e., (2) of the lemma \ref{pre} (also see \cite{Weinstein}) ) is invalid for generators contributed from reducible solutions, even near the borderline. However, one can still say something about the degree of reducible generators from the spectral flow,  which will imply an upper bound of $r_k$.\\

\end{remark}

\subsection{An upper bound of $r_k$}\label{subsection of upper bound}~\\

\begin{theorem}\label{upper bound of rk}

Suppose that $a+g$ is used to replace $a$ for some generic, small normed $g\in P$.  If $a+g$ is used to defined the Seiberg-Witten equations (which will henceforth be assumed), then
$$r_k^2 \leq \dfrac{8\pi^2 k}{ \textnormal{Vol(Y)}}+C k^{\frac{31}{32}}.$$~\\

\end{theorem}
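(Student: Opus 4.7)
The plan is to produce a sequence of reducible min-max generators at degree $-k$ whose parameters $s_j$ approach $r_k$, and to bound each $s_j$ uniformly by combining the reducible estimate in Lemma \ref{pre}(3) with a spectral-flow upper bound on the Chern--Simons value of a degree $-k$ reducible generator.

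By part (3) of the Remark following Definition \ref{rk}, there is a nondecreasing sequence $s_j \nearrow r_k$ with each $\hat{c}(s_j)=(a_j,0)$ reducible; in the present setting $(a_j,0)$ is a solution to $(SW)_{s_j,e_\mu+g}$ contributing a generator of degree $-k$. Lemma \ref{pre}(3) applied to $(a_j,0)$ yields
$$ cs(a_j)+2e_\mu(a_j)+2g(a_j) = \tfrac12 s_j^2\,\textnormal{Vol(Y)}+O(s_j).$$

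Next I would establish the following reducible analog of (\ref{sfe}): if $(a,0)$ is a reducible solution to $(SW)_{r,e_\mu+g}$ that contributes a generator of degree $-k$, then
$$ cs(a)+2e_\mu(a)+2g(a) \le 4\pi^2 k + C r^{31/16}. $$
Unlike in the irreducible case the inequality is one-sided: a single reducible solution contributes a family of generators in degrees bounded above by some top degree $-k_0(a)$ and decreasing to $-\infty$, so the constraint for the existence of a degree $-k$ generator is simply $k \geq k_0(a)$. The quantity $k_0(a)$ is encoded by the spectral flow of the perturbed Dirac operator along a path of reducibles joining a fixed reference connection to $(a,0)$. Taubes' spectral-flow formula --- the same Weitzenb\"ock/localization machinery that underlies (\ref{sfe}) in \cite{Weinstein} --- identifies this spectral flow with $(cs+2e_\mu+2g)/(4\pi^2)$ modulo an $O(r^{31/16})$ error, and the replacement of $\mathfrak{a}$ by $\mathfrak{a}+g$ assumed in the hypothesis ensures by genericity that the Dirac operators along this path are nondegenerate, so that the spectral flow is well-defined and the estimate applies.

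Combining the two displayed inequalities at $r=s_j$ and rearranging,
$$ s_j^2 \le \tfrac{8\pi^2 k}{\textnormal{Vol(Y)}} + C s_j^{31/16} + O(s_j). $$
The same inequality forces a priori $s_j^2 = O(k)$, hence $s_j^{31/16} = O(k^{31/32})$, and we conclude $s_j^2 \leq \frac{8\pi^2 k}{\textnormal{Vol(Y)}} + C k^{31/32}$ uniformly in $j$. Letting $j \to \infty$ gives $r_k^2 \le \frac{8\pi^2 k}{\textnormal{Vol(Y)}} + C k^{31/32}$.

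The main obstacle is the reducible spectral-flow inequality in the third paragraph. On the irreducible locus Taubes' (\ref{sfe}) relies on Weitzenb\"ock together with $L^\infty$ control of the spinor $\psi$; on the reducible locus $\psi\equiv 0$ and the degree data is entirely spectral, so one must extend the spectral-flow identification to families terminating at reducible configurations. This is exactly where the tame perturbation $g$ in the hypothesis is essential: generic $g$ keeps the Dirac operators off the zero-eigenvalue locus along the relevant family of reducibles, so that the spectral count defining $k_0(a)$ is continuous in the parameters and the $O(r^{31/16})$ error estimate of \cite{Weinstein} carries over.
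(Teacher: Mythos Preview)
Your proposal is correct and uses the same two ingredients as the paper: the one-sided degree constraint coming from the fact that a reducible contributes generators only in degrees $-k$ with $k\ge k_0(a)$ (equivalently, in the paper's language, the eigenvalue count $\mathfrak{X}\ge 0$), together with Taubes' spectral-flow estimate carrying an $O(r^{31/16})$ error. Your use of the sequence $s_j\nearrow r_k$ is actually a bit more careful than the paper, which simply asserts the inequality ``whenever $\hat c(r)$ is reducible'' and then passes to $r_k$.

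The only difference is a harmless detour: the paper computes the spectral flow $\mathfrak{Y}$ of the family $D_{A-is\lambda+2\mu}$, $s\in[0,r]$, \emph{directly} as $\frac{1}{16\pi^2}r^2\,\textnormal{Vol}(Y)+O(r^{31/16})$ and then writes $-k=-2\mathfrak{X}-2\mathfrak{Y}+C\le -2\mathfrak{Y}+C$. You instead route through $cs+2e_\mu+2g$, first invoking Lemma~\ref{pre}(3) to get $cs+2e_\mu+2g=\tfrac12 r^2\,\textnormal{Vol}(Y)+O(r)$, and then your ``reducible analog of (\ref{sfe})'' to bound this by $4\pi^2k+Cr^{31/16}$. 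Since both conversions ultimately rest on the same identity $cs\approx\tfrac12 r^2\,\textnormal{Vol}(Y)$ for reducibles, your route is equivalent but one step longer. The paper's direct formulation also makes clearer that the needed spectral-flow estimate is for the explicit one-parameter family in $s$, not an abstract path of reducibles, which is where the $O(r^{31/16})$ bound from \cite{Weinstein} is stated.
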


\begin{proof}

From \cite{Weinstein} one knows, each reducible generator of the ``from" version of the Seiberg-Witten Floer complex corresponds to an eigenvector of the Dirac operator $D_{A-ir\lambda+2\mu}$ with negative eigenvalue. The degree of such a generator differs by a constant (independent of the eigenvector, eigenvalue and r) from -2 times the sum of two numbers, $\mathfrak{X}$ and $\mathfrak{Y}$. These are defined as follows: The number $\mathfrak{X}$ is the number of negative eigenvalues above the eigenvalue of the given eigenvector. Meanwhile, the number $\mathfrak{Y}$ is the spectral flow for the family $D_{A-isλ+2μ}, s\in[0,r]$.

(The reason for the use here of a generic $g$ to perturb $\mathfrak{a}$ is that, the above argument requires the Dirac operator to have spectrum with multiplicity 1 for each eigenvalue, see \cite{Weinstein} for details.)\\

Thus, the degree of a reducible generator (when $r\geq 1$) is

$$-k = -2 \mathfrak{X} -2 \mathfrak{Y}+C \leq -2 \mathfrak{Y}+C = -\dfrac{1}{8\pi^2}r^2\textnormal{Vol(Y)}+O(r^{\frac{31}{16}}). $$

Thus $-k\leq - \dfrac{1}{8\pi^2}r^2\textnormal{Vol(Y)}+Cr^{\frac{31}{16}},$ whenever $\hat{c}(r)$ is reducible.\\

Thus $\dfrac{1}{8\pi^2}r_k^2\textnormal{Vol(Y)} \leq k +Cr^{\frac{31}{16}}$, which implies theorem \ref{upper bound of rk}.\\
\end{proof}

Combine theorem \ref{estimate on rk} and theorem \ref{upper bound of rk} together, here is the final conclusion about $r_k$:\\

\begin{theorem}\label{final estimate of rk}
$$r_k^2=\dfrac{8\pi^2 k}{ \textnormal{Vol(Y)}}+O( k^{\frac{32}{33}}).$$

And also,

$$ \hat{\mathfrak{a}}(r_k)=-\dfrac{1}{4}r_k^2\textnormal{Vol(Y)}+O(r_k)=-2\pi^2k+O(k^{\frac{32}{33}}).$$

\end{theorem}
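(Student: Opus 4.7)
The first assertion is immediate from the two preceding theorems: Theorem \ref{estimate on rk} gives $r_k^2 \geq \frac{8\pi^2 k}{\textnormal{Vol}(Y)} - Ck^{32/33}$, and Theorem \ref{upper bound of rk} gives $r_k^2 \leq \frac{8\pi^2 k}{\textnormal{Vol}(Y)} + Ck^{31/32}$. Since $\frac{31}{32} < \frac{32}{33}$, for $k$ large we have $k^{31/32} \leq k^{32/33}$, so the two error terms may be absorbed into a single $O(k^{32/33})$, yielding $r_k^2 = \frac{8\pi^2 k}{\textnormal{Vol}(Y)} + O(k^{32/33})$.

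For the action estimate, the plan is to approach $r_k$ from below through reducible generators and invoke continuity of $\hat{\mathfrak{a}}$. By remark (3) following Definition \ref{rk}, there is a nondecreasing sequence $s_1 \leq s_2 \leq \cdots$ with $s_j \to r_k$ such that each $\hat{c}(s_j)$ is reducible. Applying the reducible action formula (\ref{estimate of action of red}) at each $s_j$ gives
\begin{equation*}
\hat{\mathfrak{a}}(s_j) = -\tfrac{1}{4} s_j^{2}\, \textnormal{Vol}(Y) + O(s_j).
\end{equation*}
By Property \ref{min-max}(1), $\hat{\mathfrak{a}}(r)$ is continuous in $r$, so letting $j \to \infty$ and using $s_j \to r_k$ yields
\begin{equation*}
\hat{\mathfrak{a}}(r_k) = -\tfrac{1}{4} r_k^{2}\, \textnormal{Vol}(Y) + O(r_k).
\end{equation*}

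Finally, substituting the just-established estimate $r_k^2 = \frac{8\pi^2 k}{\textnormal{Vol}(Y)} + O(k^{32/33})$ into $-\frac{1}{4} r_k^2 \,\textnormal{Vol}(Y)$ produces $-2\pi^2 k + O(k^{32/33})$, while $O(r_k) = O(k^{1/2})$ is absorbed into the larger error. This gives the second displayed formula.

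There is essentially no obstacle here; the only subtlety worth noting is that one should not try to apply the reducible formula at $r = r_k$ itself, because $\hat{c}(r_k)$ need not be reducible (indeed at the borderline either type could in principle occur). The sequential approximation via the $s_j$, combined with the continuity statement of Property \ref{min-max}(1), is precisely what sidesteps this issue and delivers the claimed value of $\hat{\mathfrak{a}}(r_k)$.
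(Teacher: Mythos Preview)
Your proof is correct and follows the same approach as the paper, which in fact provides almost no argument beyond citing the two preceding theorems and noting $r_k=O(k^{1/2})$. Your careful treatment of the second assertion---approaching $r_k$ from below along the reducible sequence $s_j$ and invoking continuity of $\hat{\mathfrak{a}}$ rather than attempting to apply the reducible formula at $r_k$ directly---is more rigorous than what the paper writes out, but is exactly the argument the paper has in mind.
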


The last step is because $r_k=O(k^{\frac{1}{2}})\leq O(k^{\frac{32}{33}}).$\\

\section{The crucial estimate}\label{key}~\\

In this section, the goal is to prove theorem \ref{main theorem}.\\

\subsection{Differential equations} ~\\

\begin{lemma}
Suppose $\mu$ is generic and bounded.\\

Let $y_1=\dfrac{\hat{cs}(r)-4\pi^2 k+2\hat{e}_\mu(r)}{r} $ and $y_2=\dfrac{\hat{cs}(r)+2\hat{e}_\mu(r)}{r}, $\\

When $r > r_k$, the functions $E-y_1$ and $E-y_2$ are continuous, piecewise differentiable; and
where they are differentiable, they satisfy the following equation:
\begin{equation}\label{equation}
\dfrac{d(\hat{E}-y_i)}{dr}=\dfrac{y_i}{r}, ~~~i=1,2.
\end{equation}\\

\end{lemma}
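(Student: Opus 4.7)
The strategy is to express $\hat{E}(r) - y_i$ in closed form in terms of the action $\hat{\mathfrak{a}}(r)$, whose derivative is controlled by Property~\ref{min-max}(2), and then differentiate directly. The key ingredient is the on-shell identity already used in the proof of the corollary following Lemma~\ref{pre}: for any solution $c=(a,\psi)$ of $(SW)_{r,e_\mu}$ the Dirac term in $\mathfrak{a}$ vanishes, giving $\mathfrak{a}_{r,e_\mu}(c) = \tfrac{1}{2}(cs - rE) + e_\mu$. Substituting $c=\hat{c}(r)$ and rearranging yields
\[
\hat{cs}(r) + 2\hat{e}_\mu(r) \,=\, r\hat{E}(r) + 2\hat{\mathfrak{a}}(r),
\]
so that $\hat{E}(r) - y_2 = -2\hat{\mathfrak{a}}(r)/r$ and $\hat{E}(r) - y_1 = -2\hat{\mathfrak{a}}(r)/r + 4\pi^2 k/r$.

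Since Property~\ref{min-max}(1)--(2) makes $\hat{\mathfrak{a}}(r)$ continuous on $r\geq 1$ and piecewise differentiable for $r > r_k$, dividing by $r \geq r_k \geq 2$ and adding $4\pi^2 k/r$ immediately yields continuity and piecewise differentiability of $\hat{E} - y_i$ on $(r_k, \infty)$. To verify the differential equation, I would differentiate $\hat{E}(r) - y_2 = -2\hat{\mathfrak{a}}(r)/r$ by the quotient rule and use $\hat{\mathfrak{a}}'(r) = -\tfrac{1}{2}\hat{E}(r)$:
\[
\frac{d(\hat{E} - y_2)}{dr} \,=\, -\frac{2\hat{\mathfrak{a}}'(r)\,r - 2\hat{\mathfrak{a}}(r)}{r^2} \,=\, \frac{r\hat{E}(r) + 2\hat{\mathfrak{a}}(r)}{r^2} \,=\, \frac{\hat{cs}(r) + 2\hat{e}_\mu(r)}{r^2} \,=\, \frac{y_2}{r},
\]
the third equality being the on-shell identity once more. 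For $i=1$, adding $\tfrac{d}{dr}(4\pi^2 k/r) = -4\pi^2 k/r^2$ gives $y_2/r - 4\pi^2 k/r^2 = (y_2 - 4\pi^2 k/r)/r = y_1/r$.

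The step requiring the most care is ensuring that the on-shell identity genuinely applies to $\hat{c}(r)$. Because $r > r_k$, Property~\ref{min-max} guarantees that $\hat{c}(r)$ is irreducible and is a bona fide solution of $(SW)_{r,e_\mu}$ (no $g$ is added in this subsection), so the Dirac term in $\mathfrak{a}$ really does vanish and no hidden perturbation leaks into the $y_i$. A secondary subtlety is that $\hat{c}(r)$, and therefore both $\hat{E}(r)$ and $y_i$ individually, are only piecewise continuous in $r$; the force of the identity $\hat{E} - y_i = -2\hat{\mathfrak{a}}/r + \mathrm{const}/r$ is precisely that the jumps of $\hat{E}$ and $y_i$ cancel, producing the stated global continuity of the difference and legitimizing the quotient-rule differentiation between jump points.
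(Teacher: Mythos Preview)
Your proof is correct and is essentially the same as the paper's: both rewrite $\hat{E}-y_i$ as $-2\hat{\mathfrak{a}}(r)/r$ (plus $4\pi^2k/r$ for $i=1$) via the on-shell identity $\mathfrak{a}_{r,e_\mu}=\tfrac12(cs-rE)+e_\mu$, and then differentiate using $\hat{\mathfrak{a}}'(r)=-\tfrac12\hat{E}(r)$ from Property~\ref{min-max}(2). Your discussion of why the jumps of $\hat{E}$ and $y_i$ cancel is a welcome clarification that the paper leaves implicit.
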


\begin{proof}
From property (2) of property \ref{min-max}, one knows 
\begin{equation}\label{4.3}
\dfrac{d \hat{\mathfrak{a}}(r)}{dr}=-\dfrac{1}{2}\hat{E}(r).
\end{equation}
Also notice $y_1=\dfrac{-2\hat{a}-4\pi^2k}{r}+\hat{E}$, $y_2=-\dfrac{2\hat{a}}{r}+\hat{E}$ by definition. Differentiate these formulas using the formula in (\ref{4.3}) for derivatives of $\hat{\mathfrak{a}}$, and one gets (\ref{equation}).\\
\end{proof}

\begin{lemma}
Here are the estimates on initial values:
\begin{equation}\label{I1}
I_1:=(\hat{E}-y_1)(r_k)=r_k\textnormal{Vol(Y)}+O(r_k^{\frac{31}{33}}),
\end{equation}
\begin{equation}\label{I2}
I_2:=(\hat{E}-y_2)(r_k)=\dfrac{1}{2}r_k\textnormal{Vol(Y)}+O(1).
\end{equation}\\
\end{lemma}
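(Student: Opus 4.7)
The plan is to compute both initial values by exploiting the fact that reducible min-max generators accumulate at $r_k$ from the left, together with the reducible estimates of Lemma~\ref{pre}(3). Item~(3) of the Remark following Definition~\ref{rk} furnishes a nondecreasing sequence $s_j \nearrow r_k$ along which $\hat{c}(s_j)$ is reducible. I would apply Lemma~\ref{pre}(3) to this sequence to obtain $\hat{E}(s_j) = s_j\,\textnormal{Vol}(Y) + O(1)$ and $\hat{cs}(s_j) + 2\hat{e}_\mu(s_j) = \tfrac{1}{2} s_j^2\,\textnormal{Vol}(Y) + O(s_j)$, absorbing the bounded perturbation $g$ into the error term.

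Next I would substitute these directly into the definitions of $y_1, y_2$ and compute algebraically:
\begin{equation*}
(\hat{E} - y_2)(s_j) = \tfrac{1}{2} s_j\,\textnormal{Vol}(Y) + O(1), \qquad (\hat{E} - y_1)(s_j) = \tfrac{1}{2} s_j\,\textnormal{Vol}(Y) + \tfrac{4\pi^2 k}{s_j} + O(1).
\end{equation*}
Passing to the limit $s_j \to r_k$ then proves (\ref{I2}) immediately and reduces (\ref{I1}) to rewriting $4\pi^2 k / r_k$ in terms of $r_k$.

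For that last step I would invoke Theorem~\ref{final estimate of rk}: $r_k^2 = \tfrac{8\pi^2 k}{\textnormal{Vol}(Y)} + O(k^{32/33})$ together with $r_k = \Theta(k^{1/2})$. These give $4\pi^2 k / r_k = \tfrac{1}{2} r_k\,\textnormal{Vol}(Y) + O(k^{32/33}/r_k) = \tfrac{1}{2} r_k\,\textnormal{Vol}(Y) + O(k^{31/66})$; since $k^{31/66}$ is of the same order as $r_k^{31/33}$ (and dominates the $O(1)$), this assembles with the display above to give $I_1 = r_k\,\textnormal{Vol}(Y) + O(r_k^{31/33})$.

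The main obstacle I expect is the legitimacy of the limit at $r_k$, since $\hat{c}(r)$ is itself only piecewise continuous and can switch between reducible and irreducible phases. The reducible estimates of Lemma~\ref{pre}(3) are however uniform in $r$ on bounded intervals with constants independent of which reducible solution is evaluated, so the $O(\cdot)$ bounds survive the limit along the sequence $s_j$. Continuity of $\hat{\mathfrak{a}}(r)$ at $r = r_k$ from Property~\ref{min-max}(1) then justifies treating $I_i := (\hat{E} - y_i)(r_k)$ as the left limit along the reducibles, consistent with its intended role as the initial datum for the ODE of the preceding lemma on the interval $(r_k, \infty)$.
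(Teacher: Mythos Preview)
Your argument is correct, but the paper's route is shorter and sidesteps the limit issue you flag at the end. The paper observes (in the proof of the preceding lemma) the algebraic identities
\[
\hat{E}-y_2=\dfrac{-2\hat{\mathfrak{a}}}{r},\qquad \hat{E}-y_1=\dfrac{-2\hat{\mathfrak{a}}}{r}+\dfrac{4\pi^2k}{r},
\]
valid for every solution, and then simply plugs in the value $\hat{\mathfrak{a}}(r_k)=-\tfrac14 r_k^2\,\textnormal{Vol}(Y)+O(r_k)=-2\pi^2k+O(k^{32/33})$ from Theorem~\ref{final estimate of rk}. No sequence $s_j$ and no passage to a limit are needed: the quantities $I_1,I_2$ depend only on the continuous function $\hat{\mathfrak{a}}$, so evaluation at $r_k$ is unambiguous.

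Your approach re-derives the same numbers by applying the reducible estimates of Lemma~\ref{pre}(3) along $s_j\nearrow r_k$ and then taking a limit. The one place where your write-up is a bit loose is the final justification: continuity of $\hat{\mathfrak{a}}$ alone does not automatically give continuity of $\hat{E}-y_i$ unless you state explicitly that $\hat{E}-y_i$ is a function of $\hat{\mathfrak{a}}/r$ (since $\hat{E}$ and $y_i$ separately can jump at $r_k$). Once you record that identity, your limit argument is fine --- but at that point you have essentially reproduced the paper's one-line proof, so you may as well invoke Theorem~\ref{final estimate of rk} directly.
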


\begin{proof}
From theorem \ref{final estimate of rk}, one gets
$$ (\hat{E}-y_2)(r_k)=\dfrac{-2\hat{\mathfrak{a}}(r_k)}{r_k}=\dfrac{1}{2}r_k\textnormal{Vol(Y)}+O(1).$$
Also,
$$ (\hat{E}-y_1)(r_k)=\dfrac{-2\hat{\mathfrak{a}}(r_k)-4\pi^2k}{r_k}=r_k\textnormal{Vol(Y)}+O(1)+\dfrac{O(k^{\frac{32}{33}})}{r_k}=r_k\textnormal{Vol(Y)}+O(r_k^{\frac{31}{33}}).$$
The last step is because $k=O(r_k^2).$\\
\end{proof}

There is one more estimate to exhibit before moving on:\\
\begin{lemma}
For any $r\geq r_k,$
\begin{equation}\label{y1 estimate}
|y_1|\leq C r^{\frac{15}{16}},
\end{equation}
\begin{equation}\label{y2 estimate}
|y_2|\leq C r^{-\frac{1}{3}}\hat{E}(r)^{\frac{4}{3}}.
\end{equation}\\
\end{lemma}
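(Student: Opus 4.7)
The plan is to obtain both inequalities as immediate rearrangements of the two Taubes estimates collected in Lemma \ref{pre}(2). Since $r > r_k$, the min-max generator $\hat c(r)$ is irreducible, so the hypotheses of that part of the lemma are satisfied (with $\mu, g$ bounded and $r \geq 2$). Applied at $\hat c(r)$, the two inequalities (\ref{sfe}) and (\ref{cse}) read
\begin{align*}
|\hat{cs}(r)+2\hat e_\mu(r)+2g(\hat c(r))-4\pi^2 k| &\leq Cr^{31/16},\\
|\hat{cs}(r)+2\hat e_\mu(r)+2g(\hat c(r))| &\leq Cr^{2/3}\hat E(r)^{4/3}.
\end{align*}
By the definitions, $ry_1=\hat{cs}(r)+2\hat e_\mu(r)-4\pi^2k$ and $ry_2=\hat{cs}(r)+2\hat e_\mu(r)$, so each combination differs from the corresponding left-hand side above only by the single scalar $2g(\hat c(r))$, which is $O(1)$ since $\|g\|_P$ is bounded.

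For (\ref{y1 estimate}) the $O(1)$ correction is trivially absorbed into $Cr^{31/16}$ as long as $r\geq r_k\geq 2$; dividing through by $r$ yields $|y_1|\leq Cr^{15/16}$. For (\ref{y2 estimate}) the same rearrangement gives $r|y_2|\leq Cr^{2/3}\hat E(r)^{4/3}+O(1)$, and we divide by $r$ to conclude $|y_2|\leq Cr^{-1/3}\hat E(r)^{4/3}$, provided the $O(1)$ term can legitimately be folded into the energy-dependent majorant.

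The only step that is not purely bookkeeping, and the one I would watch most carefully, is precisely this last absorption for $y_2$: unlike the $y_1$ case, where $r^{31/16}$ dominates any constant automatically, here we need a positive uniform lower bound on the quantity $r^{2/3}\hat E(r)^{4/3}$ along the irreducible branch $r>r_k$. This should follow from the asymptotic regime: $\hat E(r)$ behaves like $r\,\textnormal{Vol}(Y)$ for large $r$ (consistent with Lemma \ref{pre}(1) and the companion reducible estimate (\ref{E estimate}) used to set $I_1,I_2$ in (\ref{I1}),(\ref{I2})), so that $r^{2/3}\hat E(r)^{4/3}$ is of order $r^2\gg 1$ throughout the range of interest; failing that, one invokes the fact that an irreducible Seiberg-Witten solution has $\hat E(r)>0$ and uses continuity together with $r\geq 2$ to produce a uniform lower bound. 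Either way, the extra $2g$ term is absorbed into a redefinition of the constant $C$, and the claimed bound follows.
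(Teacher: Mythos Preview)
Your approach is exactly the paper's: apply (\ref{sfe}) and (\ref{cse}) to the irreducible $\hat c(r)$ and divide by $r$. The paper's proof is two sentences to the same effect.

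The one place where your argument is shakier than the paper's is the justification of a uniform positive lower bound on $\hat E(r)$ (needed both to invoke Lemma~\ref{pre}(2), whose hypothesis reads ``suppose its $E,r$ has a positive lower bound'', and to absorb any stray $O(1)$ into $r^{2/3}\hat E(r)^{4/3}$). Your first suggestion is not quite right: Lemma~\ref{pre}(1) is an \emph{upper} bound on $E$, and (\ref{E estimate}) applies only to \emph{reducible} solutions, so neither gives the lower bound you want; and appealing to the asymptotic $\hat E(r)\sim r_k\,\textnormal{Vol}(Y)$ is circular, since that is established only in the subsequent lemmas (\ref{411}) and \ref{niubi} which themselves use (\ref{y2 estimate}). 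Your fallback (positivity for irreducibles plus continuity) also does not yield a uniform bound on the unbounded, merely piecewise-continuous family $r\mapsto \hat E(r)$.

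The paper closes this gap differently: it records that $\hat E$ is bounded below by a positive constant \emph{because the class $\{\sigma\}$ is nontrivial}, citing \cite{Weinstein}. That is the correct input here; with it, the rest of your write-up goes through verbatim.
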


\begin{proof}
(\ref{y1 estimate}) is directly from the definition of $y_1$ and  (\ref{sfe}).\\

(\ref{y2 estimate}) is from the definition of $y_2$ and (\ref{cse}) (notice $\hat{E}$ is bounded from below is because $\{\sigma\}$ is nontrivial, see \cite{Weinstein} for a similar argument).\\

\end{proof}

\subsection{Integrals and asymptotic comparison estimates}~\\

Now let's do the integration estimates.\\

\begin{lemma}
Suppose $r\geq r_k$,
\begin{equation}\label{411}
\hat{E}(r) = r_k\textnormal{Vol(Y)}+ O(r^{\frac{31}{33}}).
\end{equation}

\end{lemma}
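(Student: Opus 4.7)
The plan is to integrate the first differential equation in (\ref{equation}), namely $d(\hat{E}-y_1)/dr = y_1/r$, from the initial point $r_k$ out to $r$, using the initial value $I_1$ from (\ref{I1}) together with the pointwise bound (\ref{y1 estimate}) on $y_1$. Since $\hat{E}-y_1$ is continuous and piecewise differentiable on $[r_k,r]$ by the lemma containing (\ref{equation}), I would apply the fundamental theorem of calculus on each differentiability piece and add up using continuity across breakpoints to obtain
$$(\hat{E}-y_1)(r) \;=\; I_1 \;+\; \int_{r_k}^{r} \frac{y_1(s)}{s}\, ds.$$

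Next I would estimate the integral. By (\ref{y1 estimate}), $|y_1(s)/s| \leq C s^{-1/16}$, so
$$\left|\int_{r_k}^{r} \frac{y_1(s)}{s}\, ds\right| \;\leq\; C\int_{r_k}^{r} s^{-1/16}\, ds \;\leq\; C\, r^{15/16}.$$
Combining this with the pointwise bound $|y_1(r)| \leq C r^{15/16}$ from (\ref{y1 estimate}) and the initial-value estimate $I_1 = r_k\,\textnormal{Vol}(Y) + O(r_k^{31/33})$ from (\ref{I1}) yields
$$\hat{E}(r) \;=\; r_k\,\textnormal{Vol}(Y) \;+\; O(r^{15/16}) \;+\; O(r_k^{31/33}).$$
Since $r \geq r_k \geq 2$ and $\tfrac{15}{16} < \tfrac{31}{33}$, both remainder terms are dominated by $O(r^{31/33})$, which gives the claim.

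I do not anticipate any serious obstacle: the differential equation, the initial value, and the pointwise bound on $y_1$ are all in place from the preceding subsection, and the argument amounts to a single integration together with an exponent comparison. The only minor technical point is that $\hat{E}-y_1$ is only piecewise differentiable rather than $C^1$, but its continuity (already part of the lemma's hypothesis) lets the fundamental-theorem identity be patched across the finitely many breakpoints, so the passage from the pointwise equation to the integral identity causes no trouble. It is worth noting that the equation for $y_2$ is not used here; it will presumably be needed later to sharpen the comparison between $\hat{E}(r)^2$ and $8\pi^2 k\,\textnormal{Vol}(Y)$ in Theorem \ref{main theorem}, where the nonlinear bound (\ref{y2 estimate}) becomes relevant.
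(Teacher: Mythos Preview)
Your proposal is correct and follows essentially the same route as the paper: integrate the equation $d(\hat{E}-y_1)/dr=y_1/r$ from $r_k$ to $r$, bound the integral by $\int_{r_k}^r C s^{-1/16}\,ds=O(r^{15/16})$ via (\ref{y1 estimate}), and then combine with (\ref{I1}) and one more use of (\ref{y1 estimate}) to get $\hat{E}(r)=r_k\,\textnormal{Vol}(Y)+O(r_k^{31/33})+O(r^{15/16})=r_k\,\textnormal{Vol}(Y)+O(r^{31/33})$ since $r\geq r_k$ and $\tfrac{31}{33}>\tfrac{15}{16}$. Your added remarks about patching across breakpoints and about $y_2$ being reserved for later are accurate but not needed for this lemma.
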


\begin{proof}
Plug (\ref{y1 estimate}) into (\ref{equation}), one gets

$$|(\hat{E}(r)-y_1(r))-I_1)|\leq \int^{r}_{r_k} C s^{-\frac{1}{16}} ds= O(r^{\frac{15}{16}}), $$

combining with (\ref{I1}) and (\ref{y1 estimate}) again one gets

$$\hat{E}(r)=r_k\textnormal{Vol(Y)} + O(r_k^{\frac{31}{33}})+O(r^{\frac{15}{16}}) = r_k\textnormal{Vol(Y)}+ O(r^{\frac{31}{33}}).$$

The last step is because $r\geq r_k$ and $\dfrac{31}{33}>\dfrac{15}{16}$.\\

\end{proof}

\begin{lemma}\label{diedai}
Suppose
\begin{equation}\label{5}
\hat{E}(r)=r_k \textnormal{Vol}(Y)+O(r^\delta+r_k^\epsilon)
\end{equation}

with $\delta, \epsilon$ constrained as follows: First $0 < \delta < 1$, and  $\delta \neq \dfrac{1}{4})$. Second,   $\dfrac{4 \delta}{3 \delta +1} \leq \epsilon < 1 $. Then $\hat{E}(r)=r_k$ obeys the stronger bound:

$$ \hat{E}(r)=r_k \textnormal{Vol}(Y)+O(r^{\frac{4}{3}\delta-\frac{1}{3}}+r_k^\epsilon).$$~\\

\end{lemma}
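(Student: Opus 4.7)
The plan is to integrate the ODE (\ref{equation}) for $y_2$ to obtain
$$\hat{E}(r) = y_2(r) + I_2 + \int_{r_k}^{r} \frac{y_2(s)}{s}\, ds,$$
where $I_2 = \tfrac{1}{2}r_k\textnormal{Vol}(Y) + O(1)$, and then to estimate $y_2(s)$ by combining the Taubes inequality (\ref{y2 estimate}), $|y_2(s)| \leq C s^{-1/3}\hat{E}(s)^{4/3}$, with the hypothesised expansion $\hat{E}(s) = r_k \textnormal{Vol}(Y) + O(s^\delta + r_k^\epsilon)$. Using $(a+b+c)^{4/3} \leq C(a^{4/3}+b^{4/3}+c^{4/3})$ for $a,b,c\geq 0$ yields $\hat{E}(s)^{4/3} \leq C\bigl[r_k^{4/3} + s^{4\delta/3} + r_k^{4\epsilon/3}\bigr]$, and hence
$$|y_2(s)| \leq C\bigl[s^{-1/3} r_k^{4/3} + s^{4\delta/3 - 1/3} + s^{-1/3} r_k^{4\epsilon/3}\bigr].$$
Integrating $|y_2(s)|/s$ against $ds$ over $[r_k,r]$, the second and third terms immediately give the desired contributions: the second integrates to $O(r^{4\delta/3 - 1/3})$ (using $\delta > 1/4$; the case $\delta < 1/4$ is simpler since the integral converges), and the third to $O(r_k^{4\epsilon/3 - 1/3})$, which is $\leq O(r_k^\epsilon)$ since $\epsilon < 1$.

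The serious difficulty is the first term, which naively contributes $O(r_k)$ to the integral and would destroy the bound. I would resolve this by extracting the signed leading behaviour of $y_2$: writing $\hat{E}-y_2 = -2\hat{\mathfrak{a}}_\mu/r$ and integrating $d\hat{\mathfrak{a}}_\mu/ds = -\hat{E}/2$ starting from Theorem \ref{final estimate of rk}, one obtains the decomposition
$$y_2(s) = \frac{r_k^2\textnormal{Vol}(Y)}{2s} + R(s), \qquad R(s) = E_{\mathrm{err}}(s) + O(r_k/s) - \frac{1}{s}\int_{r_k}^{s} E_{\mathrm{err}}(t)\, dt,$$
where $E_{\mathrm{err}}(s) := \hat{E}(s) - r_k\textnormal{Vol}(Y)$. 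The integral $\int_{r_k}^{r} r_k^2\textnormal{Vol}(Y)/(2s^2)\, ds = \tfrac{1}{2}r_k\textnormal{Vol}(Y) - r_k^2\textnormal{Vol}(Y)/(2r)$ combines cleanly with $I_2$ and with the $r_k^2\textnormal{Vol}(Y)/(2r)$ component of $y_2(r)$ to produce the leading term $r_k\textnormal{Vol}(Y)$ on the right hand side of the conclusion. The problem thus reduces to controlling $R(r) + \int_{r_k}^{r} R(s)/s\, ds$ by $O(r^{4\delta/3 - 1/3} + r_k^\epsilon)$.

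For this I would bound $|R(s)|$ regime-wise, using the tighter of two estimates: the decomposition estimate $|R(s)| \leq C(s^\delta + r_k^\epsilon)$ (from the explicit formula above) and the Taubes-derived estimate $|R(s)| \leq Cs^{-1/3}\hat{E}(s)^{4/3} + r_k^2\textnormal{Vol}(Y)/(2s)$. A direct computation shows that the two coincide, up to constants, at $s^* := r_k^{4/(3\delta+1)}$; for $s \leq s^*$ the decomposition bound is tighter, and for $s \geq s^*$ the Taubes-derived bound is. The hypothesis $\epsilon \geq 4\delta/(3\delta+1)$ is exactly the statement that the common value at $s=s^*$ is $\leq r_k^\epsilon$, so that both bounds individually produce at most $O(s^{4\delta/3 - 1/3} + r_k^\epsilon)$ in their respective regimes. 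Integration then yields the claimed $O(r^{4\delta/3 - 1/3} + r_k^\epsilon)$ bound, modulo at worst a $\log r_k$ factor which is harmless for the subsequent iteration.

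The main obstacle I expect is bookkeeping the signed cancellation between the decomposed pieces of $y_2(r)$, the integrated $\int r_k^2 \textnormal{Vol}(Y)/(2s^2)\, ds$, and $I_2$, so that the unwanted $O(r_k)$ terms coming from the naive $|y_2|$-integration disappear exactly. The algebraic form of the exponent $4/(3\delta+1)$ and its link to the constraint on $\epsilon$ is the delicate point; everything else reduces to routine integration of power functions with the case distinction $\delta\lessgtr 1/4$.
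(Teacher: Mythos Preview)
Your approach is correct in spirit and locates the right threshold exponent $r_k^{4\delta/(3\delta+1)}$, but it is considerably more roundabout than the paper's, and introduces a spurious $\log r_k$ factor that the paper avoids.

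The paper's key simplification is this: rather than integrating the ODE for $\hat E - y_2$ from $r_k$ (which, as you observe, produces a dangerous $O(r_k)$ term from $\int r_k^{4/3}s^{-4/3}\,ds$ and forces you into the signed decomposition of $y_2$), the paper moves the base point of integration to an intermediate $r_0 \geq r_k$. One integrates
\[
\bigl|\bigl(\hat E(r) - y_2(r)\bigr) - \bigl(\hat E(r_0) - y_2(r_0)\bigr)\bigr| \;\le\; \int_{r_0}^{r} \frac{|y_2(s)|}{s}\,ds \;=\; O\bigl(r_0^{-1/3}r_k^{4/3} + r^{\frac{4}{3}\delta-\frac{1}{3}}\bigr),
\]
then uses the hypothesis (\ref{5}) \emph{pointwise} at $r_0$ to get $\hat E(r_0) = r_k\textnormal{Vol}(Y) + O(r_0^\delta + r_k^\epsilon)$, and finally optimizes over $r_0$. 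The choice $r_0 = r_k^{4/(3\delta+1)}$ balances $r_0^\delta$ against $r_0^{-1/3}r_k^{4/3}$, both equalling $r_k^{4\delta/(3\delta+1)} \le r_k^\epsilon$; for $r_k \le r \le r_0$ the conclusion follows directly from the hypothesis since $r^\delta \le r_0^\delta \le r_k^\epsilon$. Your split point $s^*$ is exactly this $r_0$; what you work hard to achieve by integrating from $r_k$ and tracking the signed cancellation between $I_2$, the decomposed $y_2(r)$, and the integrated leading piece, the paper gets for free by simply invoking the hypothesis at $r_0$.

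So: your argument works (modulo absorbing the log into a slightly larger $\epsilon$, which is harmless for the downstream iteration, as you note), but the paper's shift-of-base-point trick is the clean way to do it and proves the lemma exactly as stated.
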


\begin{proof}
Plug (\ref{5}) into (\ref{y2 estimate}), one gets

\begin{equation} \label{6}
|y_2|\leq C r^{-\frac{1}{3}}(r_k \textnormal{Vol}(Y)+O(r^\delta)+O(r_k^\epsilon))^{\frac{4}{3}} = O(r^{-\frac{1}{3}}r_k^{\frac{4}{3}}+r^{\frac{4}{3}\delta-\frac{1}{3}}).
\end{equation}

Choose $r_0\geq r_k$,~ when $r\geq r_0$, plug the above inequality into (\ref{equation}), one gets

\begin{equation} \label{eq1}
\begin{split}
&~~~~|(\hat{E}(r)-y_2(r))-(\hat{E}(r_0)-y_2(r_0))|\\
& \leq C \int^{r}_{r_0}  s^{-\frac{4}{3}}r_k^{\frac{4}{3}}+s^{\frac{4}{3}\delta-\frac{4}{3}} ds\\
& =O(r^{-\frac{1}{3}}r_k^{\frac{4}{3}}+r_0^{-\frac{1}{3}}r_k^{\frac{4}{3}}+r^{\frac{4}{3}\delta-\frac{1}{3}}+r_0^{\frac{4}{3}\delta-\frac{1}{3}}).
\end{split}
\end{equation}

Thus, use (\ref{6}) again to $y_2(r)$ and $y_2(r_0)$ on the left hand side above, one has

\begin{equation}
\begin{split}
&~~~~|\hat{E}(r)-\hat{E}(r_0)|\\
&=O(r^{-\frac{1}{3}}r_k^{\frac{4}{3}}+r_0^{-\frac{1}{3}}r_k^{\frac{4}{3}}+r^{\frac{4}{3}\delta-\frac{1}{3}}+r_0^{\frac{4}{3}\delta-\frac{1}{3}}+r^{\frac{4}{3}\delta-\frac{1}{3}}+r_0^{\frac{4}{3}\delta-\frac{1}{3}})\\
&=O(r^{-\frac{1}{3}}r_k^{\frac{4}{3}}+r_0^{-\frac{1}{3}}r_k^{\frac{4}{3}}+r^{\frac{4}{3}\delta-\frac{1}{3}}+r_0^{\frac{4}{3}\delta-\frac{1}{3}}).
\end{split}
\end{equation}

Remember $r\geq r_0\geq r_k \geq 2$, so whether or not $\dfrac{4}{3}\delta-\dfrac{1}{3}$ is positive, one always has $r_0^{\frac{4}{3}\delta-\frac{1}{3}}=O(r^{\frac{4}{3}\delta-\frac{1}{3}}+r_k^{\epsilon})$, and $r^{-\frac{1}{3}}r_k^{\frac{4}{3}} \leq r_0^{-\frac{1}{3}}r_k^{\frac{4}{3}} $.\\

So together with (\ref{5}) one gets :\\

$\hat{E}(r)=r_k\textnormal{Vol}(Y)+O(r_0^\delta+r_k^\epsilon+r_0^{-\frac{1}{3}}r_k^{\frac{4}{3}}+r^{\frac{4}{3}\delta-\frac{1}{3}}).$\\

The is also true when $r_k \leq r\leq r_0$ directly by (\ref{5}).\\

So by choosing $r_0=r_k^\frac{4}{3 \delta +1} $ so that $r_0^{\delta} = r_0^{-\frac{1}{3}}r_k^{\frac{4}{3}}=r_k^{\frac{4 \delta}{3 \delta +1}} = O(r_k^ \epsilon),$\\

one has $\hat{E}(r)=r_k\textnormal{Vol}(Y)+O(r_k^\epsilon+r^{\frac{4}{3}\delta-\frac{1}{3}}).$\\

\end{proof}

\begin{lemma}\label{niubi}
Same condition as lemma \ref{diedai} , but the result is

$$ \hat{E}(r)=r_k \textnormal{Vol}(Y)+O(r_k^\epsilon).$$\\
\end{lemma}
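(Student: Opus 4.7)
The plan is to iterate Lemma \ref{diedai} a finite number of times. Starting from the given hypothesis $\hat{E}(r) = r_k \textnormal{Vol(Y)} + O(r^{\delta_0} + r_k^{\epsilon})$ with $\delta_0 = \delta$, each application of Lemma \ref{diedai} upgrades the estimate to one with the exponent $\delta_n$ replaced by $\delta_{n+1} := \frac{4}{3}\delta_n - \frac{1}{3}$, while keeping the $r_k^\epsilon$ term. Rewriting this recursion as $1 - \delta_{n+1} = \frac{4}{3}(1 - \delta_n)$ gives $1 - \delta_n = (4/3)^n (1 - \delta_0)$, so $\delta_n$ decreases geometrically away from $1$ and becomes non-positive after finitely many steps.

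At each step the hypotheses of Lemma \ref{diedai} must be checked. The bound $\delta_n < 1$ is preserved automatically by the recursion, and the condition $\frac{4\delta_n}{3\delta_n+1} \leq \epsilon$ is preserved because the map $\delta \mapsto \frac{4\delta}{3\delta + 1}$ is increasing in $\delta$ while $\delta_n$ is decreasing. The only delicate condition is $\delta_n \neq \frac{1}{4}$, which corresponds to the logarithmic divergence of the integral $\int s^{\frac{4}{3}\delta - \frac{4}{3}}\,ds$ in the proof of Lemma \ref{diedai}. If the orbit lands exactly on $\frac{1}{4}$ (it need not, for generic $\delta_0$), I weaken the previous-step estimate by enlarging its exponent to $\delta_{n-1}+\eta$ for an arbitrarily small $\eta>0$ — still a valid input — so that the next iterate becomes $\frac{1}{4} + \frac{4\eta}{3} \neq \frac{1}{4}$, and the cost is absorbed into the implicit constant.

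Iteration terminates at the first $n$ with $\delta_n \leq 0$. At that point Lemma \ref{diedai} has produced $\hat{E}(r) = r_k \textnormal{Vol(Y)} + O(r^{\delta_n} + r_k^{\epsilon})$, and since $r \geq r_k \geq 2$ and $\epsilon > 0$ one has $r^{\delta_n} \leq 1 \leq r_k^{\epsilon}$, so the $r$-dependent term is absorbed into $O(r_k^{\epsilon})$, yielding the claim. The number of iterations needed is bounded by the least integer $N$ with $(4/3)^N(1 - \delta_0) \geq 1$; this depends only on $\delta_0$, not on $r$ or $k$, so the implicit constant in the final bound is uniform in $r$ and $k$. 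The main obstacle here is purely bookkeeping — one must keep track, across the iteration, that the $r_k^\epsilon$ contribution never blows up relative to the improving $r^{\delta_n}$ term and that the hypothesis on $\epsilon$ is genuinely strong enough to absorb every intermediate step; the underlying mechanism is a straightforward geometric contraction of the exponent.
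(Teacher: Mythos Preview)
Your argument is correct and follows the same route as the paper: iterate Lemma~\ref{diedai}, using that the map $\delta \mapsto \frac{4}{3}\delta - \frac{1}{3}$ satisfies $1-\delta \mapsto \frac{4}{3}(1-\delta)$ so the exponent drops below zero in finitely many steps, perturbing slightly if the orbit ever lands on $\tfrac{1}{4}$, and finally absorbing $r^{\delta_n}\le 1$ into $O(r_k^{\epsilon})$. Your write-up is in fact more careful than the paper's about verifying that the hypotheses on $\delta$ and $\epsilon$ persist through the iteration.
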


\begin{proof}
Starting with any $\delta$, iterating lemma \ref{diedai}, by replacing $\delta$ with\\

$\dfrac{4}{3}\delta-\dfrac{1}{3}=1-\dfrac{4}{3}(1-\delta)$ finite many times, and increase a little bit if it touches $\dfrac{1}{4}$, until it is below 0, so the corresponding term can be bounded, and can be absorbed into $O(r_k^{\epsilon})$. ~Finally one can get,

$$\hat{E}(r)=r_k \textnormal{Vol}(Y)+O(r_k^\epsilon).$$

\end{proof}

\subsection{Proof of theorem \ref{main theorem}}~\\

An appeal to lemma \ref{niubi} can be made starting from $\delta=\dfrac{31}{33}, \epsilon = \dfrac{4 \delta}{3 \delta +1} = \dfrac{62}{63}$ because of (\ref{411}). This appeal leads to the bound:\\

$$\hat{E}(r)=r_k \textnormal{Vol}(Y)+O(r_k^\frac{62}{63}).$$

So

$$\hat{E}(r)^2=r_k^2 \textnormal{Vol}(Y)^2+O(r_k^\frac{125}{63}). $$

So use theorem \ref{final estimate of rk} again,

$$\hat{E}(r)^2- 8 \pi^2 k \textnormal{Vol}(Y) = O(r_k^\frac{125}{63}+k^{\frac{32}{33}})= O(k^{\frac{125}{126}}).$$

The last step is because $r_k=O(k^{\frac{1}{2}})$, so $O(r_k^\frac{125}{63}+k^{\frac{32}{33}})=O(k^{\frac{125}{126}}+k^{\frac{32}{33}})=O(k^{\frac{125}{126}}).$\\

Finally, from above, one gets

$$|\dfrac{\hat{E}(r)^2}{8\pi^2 k}-\textnormal{Vol}(Y)|=O( k^{-\frac{1}{126}}).$$~\\

\section{Existence of min-max generators}\label{mm}~\\

This section gives the construction of min-max generators.\\

\subsection{Construction of $\hat{\mathfrak{a}}(r)$ for any $\mu$}~\\

\begin{definition}\label{definition of a(r)}
Fix $r$ and $\mu$, for any integer $m>1$, choose $g_m\in P$ with $\|g_m \|_P < \dfrac{1}{m} $ and generic (so $\widehat{HM}_k(Y)_{r,e_{\mu}+g_m}$ is well-defined). Let

$$\hat{\mathfrak{a}}(r)_{e_{\mu}+g_m}=\min\{\max\{ \mathfrak{a}_{r,e_{\mu}+g_m}(c)| ~c ~\textnormal{is a generator of } \sigma \}  | ~ \sigma~\textnormal{is a representative of} \{ \sigma \} \}.$$

Furthermore, let $\hat{\mathfrak{a}}(r)=\lim\limits_{m\rightarrow +\infty} \hat{\mathfrak{a}}(r)_{e_{\mu}+g_m}$, then one gets:\\
\end{definition}

\begin{lemma}\label{independence}
$\hat{\mathfrak{a}}(r)$  doesn't depend on  $g_m$.\\
\end{lemma}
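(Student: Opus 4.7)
The plan is to reduce Lemma \ref{independence} to the quantitative estimate
\[
|\hat{\mathfrak{a}}(r)_{e_\mu + g} - \hat{\mathfrak{a}}(r)_{e_\mu + g'}| \leq C(\|g\|_P + \|g'\|_P)
\]
valid for any two generic $g, g' \in P$ of small $P$-norm, with $C$ depending only on $r$, $\mu$, and $\{\sigma\}$. Granted this, the defining sequence $\hat{\mathfrak{a}}(r)_{e_\mu + g_m}$ is Cauchy, and any two such sequences share the same limit, which is the content of the lemma. Finiteness of each term is routine: an upper bound follows from plugging in any fixed cycle representing $\{\sigma\}$, while a lower bound follows from the generator-by-generator action bounds (\ref{estimate of action of red}) and (\ref{estimate of action on irr}) of Lemma \ref{pre}.

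The first step is to realize the canonical isomorphism $T$ from Definition \ref{done}(1) at chain level in the standard Floer fashion: interpolate from $g$ to $g'$ via the linear path $g_\tau = (1-\tau)g + \tau g'$, $\tau \in [0,1]$, reparametrize to an $s$-dependent family on $s \in \IR$ via a smooth cutoff (generic within the interpolation, following \cite{bible}), and count solutions of the corresponding $s$-dependent Seiberg-Witten equation on $\IR \times Y$. This yields a chain map whose induced map on homology is $T$.

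The key input is the energy identity for a connecting trajectory $c(\cdot)$ from a generator $c_-$ at $s = -\infty$ to a generator $c_+$ at $s = +\infty$:
\[
\mathfrak{a}_{r,\, e_\mu + g'}(c_+) - \mathfrak{a}_{r,\, e_\mu + g}(c_-) = -\int_{-\infty}^{\infty} |\partial_s c|^2 \, ds + \int_{-\infty}^{\infty} (\partial_s g_s)(c(s)) \, ds.
\]
The first integral is nonpositive; the second is bounded in absolute value by $\int_0^1 |\partial_\tau g_\tau|(c) \, d\tau \leq \|g - g'\|_P \leq \|g\|_P + \|g'\|_P$, since elements of $P$ control their pointwise values on configurations up to a uniform constant. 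Hence every generator $c_+$ appearing in the image cycle $T(\sigma)$ has action at most $C(\|g\|_P + \|g'\|_P)$ above the maximum action of a generator in $\sigma$. Minimizing over cycles $\sigma$ representing $\{\sigma\}$ and then swapping the roles of $g, g'$ produces the desired two-sided bound.

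The main technical obstacle is turning this formal energy identity into an honest estimate. One must verify (i) transversality of the parametrized moduli spaces, so that $T$ is well-defined at chain level (handled by genericity of the path $g_s$ as in \cite{bible}); (ii) compactness modulo breaking along the continuation, so that every generator of $T(\sigma)$ is actually reached by a broken trajectory; and (iii) persistence of the action bound under breaking—each broken piece is a pure gradient trajectory that only decreases action, so this is automatic. A subsidiary subtlety is the presence of reducible generators, but since Definition \ref{definition of a(r)} ranges the min-max over all cycle representatives regardless of type, the chain-level continuation estimate applies uniformly to both sorts of generators.
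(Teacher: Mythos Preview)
Your proposal is correct and follows essentially the same route as the paper: both interpolate between the two perturbations, realize the canonical isomorphism $T$ by counting continuation trajectories on $Y\times\IR$, apply the energy identity to see that action can increase by at most $\int(\partial_s g_s)(c(s))\,ds$, and then run the min-max argument symmetrically to obtain $|\hat{\mathfrak{a}}(r)_{e_\mu+g_m}-\hat{\mathfrak{a}}(r)_{e_\mu+g'_m}|\leq C_r/m$. The one place where the paper is slightly more explicit is in bounding that last integral: rather than asserting that elements of $P$ have pointwise values controlled by their $P$-norm uniformly, it writes the bound as $\int_{-1}^{1}\tfrac{4}{m}\|c(s)\|\,ds\leq C_r/m$, making clear that an $r$-dependent a priori bound on the trajectory $c(s)$ is what supplies the constant---your ``uniform constant'' should be understood in exactly this sense.
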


\begin{proof}

Suppose there are two different ways of choosing $g_m$, denoted as $g_m$ and $g'_m$ separately. Connect them via a generic path $g(s)\in P, ~-\infty<s< +\infty $ which is defined so that $g(s) = g_m +e_{\mu}$ where $s < -1$, and $g(s) = g'_m +e_{\mu}$ where $s > +1$. The path can also be chosen to obey the bound $\|\dfrac{dg}{ds} \| \leq \dfrac{4}{m}$. (The notation here uses $\| \cdot \|$ to denote the P norm defined in \cite{bible}.)\\

Consider the SW trajectories on $ Y \times \IR$ using perturbation $g(s)$. The corresponding instantons on $Y\times \IR$ give an isomorphism $T: \widehat{HM}_{-k}(SW)_{r,g_m+e_{\mu}} \rightarrow \widehat{HM}_{-k}(SW)_{r,g'_m+e_{\mu}}$.\\

To be precise, $T$ is the map $\hat{m}$ defined in definition 25.3.4 of the book \cite{bible}, evaluating at the cohomology class ``1" of the blown-up configuration space of $Y \times \IR$. The above $T$ is a prior only an homomorphism from $ \widehat{HM}_{\bullet}(SW)_{r,g_m+e_{\mu}} $ to $ \widehat{HM}_{\bullet}(SW)_{r,g'_m+e_{\mu}}$. (See theorem 23.1.5 and its corollary in the book \cite{bible}.) Here, $HM_{\bullet}$ stands for the negative completion of the homology, in the sense of definition 3.1.3 of the book \cite{bible}. (This notation is not important in this paper.) \\

However, in the special case as above, $T$ is an isomorphism and keeps the degree $-k$. This is because here $c_1(S)$ is torsion and the perturbation is balanced, and the cobordism $Y \times \IR$ is a cylinder. For a generic $g(s)$, the above $T$ counts the instantons on $Y \times \IR$ in four different ways (see definition 25.3.3 of the book \cite{bible}, where $T$ has four components which form a $2\times 2$ matrix. The four components are correspondence to : (1) irreducible to irreducible, (2)irreducible to reducible, (3)reducible to irreducible, (4) reducible to reducible respectively). Carefully checking them, one finds that in each component, $T$ only counts the (possibly broken) instantons on $Y \times \IR$ which connects elements in  $ \widehat{HM}_{\bullet}(SW)_{r,g_m+e_{\mu}} $ to $ \widehat{HM}_{\bullet}(SW)_{r,g'_m+e_{\mu}}$ with the the same degree. \\

Now let $c, c'$ be solutions to $(SW)_{r,g_m+e_{\mu}}, (SW)_{r,g'_m+e_{\mu}}$ both of degree -k, and with $c'$ being a component of $Tc$. Then, there is at least one instanton trajectory (or possibly a broken one)  connecting $c$ to $c'$. By instanton, I mean a family of configurations parametrized by the coordinate $s$ for $\IR$ obeying the following conditions:  First, the $s \rightarrow -\infty$ limit should be c and the $s \rightarrow \infty$ limit should be gauge equivalence with $c´$  (still denoted as $c'$).  Second, the s-dependent family of configuration should obey the equation:\\

$$\dfrac{d}{ds} c(s) = -\nabla (\textfrak{a}_r+g(s)). $$

Although the definition of $T$ (see $\hat{m}$ in the definition 25.3.4 of the book \cite{bible}) used the blown-up configuration space, the instanton trajectory used here is only its projection to the configuration space without blown up. Granted above, then

\begin{equation}
\begin{split}
\mathfrak{a}_{r,g'_m+e_{\mu}}(c')-\mathfrak{a}_{r, g_m+e_{\mu}}(c)&= \int_{-\infty}^{+\infty}  \frac{d}{ds}(\mathfrak{a}_r(c(s))+g(s)(c(s)))ds \\
&= \int_{-\infty}^{+\infty} (\nabla(\mathfrak{a}_r+g(s))\cdot \dfrac{dc(s)}{ds}+\dfrac{dg(s)}{ds}(c(s)) )ds\\
&= \int_{-\infty}^{+\infty} (-\|\nabla(\mathfrak{a}_r+g(s))\|^2+\dfrac{dg(s)}{ds}(c(s)) ))ds\\
&\leq  \int_{-\infty}^{+\infty} \dfrac{dg(s)}{ds}(c(s))ds \\
&\leq  \int_{-1}^{1} \dfrac{4}{m}\|c(s)\|ds \leq \dfrac{C_r}{m}.
\end{split}
\end{equation}

Here $C_r$ is some constant independent with $m$, $g_m$ and $g'_m$.\\

Let's continue the proof, suppose $\hat{\sigma}_{r,g_m+e_{\mu}}$ is a representative of $\{\sigma\}$ and $\hat{c}_{r, g_m+e_{\mu}}$ is a component of $\hat{\sigma}_{r,g_m+e_{\mu}}$  which achieves the min-max of action, i.e.,  

$$\mathfrak{a}_{r,g_m+e_{\mu}}(\hat{c}_{r,g_m+e_{\mu}}) = \hat{\mathfrak{a}}(r)_{g_m+e_{\mu}}.$$

Let $c$ be any component of $\hat{\sigma}_{r,g_m+e_{\mu}}$, then by definition, $$\mathfrak{a}_{r,g_m+e_{\mu}}(c)\leq \mathfrak{a}_{r,g_m+e_{\mu}}(\hat{c}_{r,g_m+e_{\mu}}).$$

Let $c'$ be any component of $Tc$, one gets , by the above lemma,

$$\mathfrak{a}_{r,g'_m+e_{\mu}}(c')\leq \mathfrak{a}_{r,g_m+e_{\mu}}(c) +\dfrac{C_r}{m}\leq \mathfrak{a}_{r,g_m+e_{\mu}}(\hat{c}_{r,g_m+e_{\mu}})+\dfrac{C_r}{m}.$$

Since the above is true for any component of $T\hat{\sigma}_{r,g_m+e_{\mu}}$, which is a representative of $\{\sigma\}$ in the $g_m'$ version of SW homology, one gets:

$$\hat{\mathfrak{a}}(r)_{g'_m+e_{\mu}}\leq \hat{\mathfrak{a}}(r)_{g_m+e_{\mu}} +\dfrac{C_r}{m}. $$

Similarly,

$$\hat{\mathfrak{a}}(r)_{g_m+e_{\mu}}\leq \hat{\mathfrak{a}}(r)_{g'_m+e_{\mu}} +\dfrac{C_r} {m}.$$ 

So $| \hat{\mathfrak{a}}(r)_{g'_m+e_{\mu}}-\hat{\mathfrak{a}}(r)_{g_m+e_{\mu}} | \leq \dfrac{C_r}{m}$. Let $m\rightarrow +\infty$, it implies lemma \ref{independence}.\\
\end{proof}

\subsection{Continuity of $\hat{\mathfrak{a}}(r)$}~\\

\begin{theorem}\label{5.4}
The $\hat{\mathfrak{a}}(r)$ defined just now is continuous along r.\\
\end{theorem}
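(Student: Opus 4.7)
The plan is to mimic the structure of the proof of Lemma \ref{independence}, but to vary $r$ along a one-parameter family of Seiberg--Witten equations instead of varying $g$. Fix generic perturbations $g_m\in P$ so that $\widehat{HM}_{-k}(Y)_{r,e_\mu+g_m}$ and $\widehat{HM}_{-k}(Y)_{r',e_\mu+g_m}$ are both well-defined, and connect the two parameters by a smooth path $r(s)$ with $r(s)=r$ for $s\le -1$, $r(s)=r'$ for $s\ge 1$, and $\bigl|\tfrac{dr}{ds}\bigr|\le 2|r-r'|$. The $s$-dependent family of functionals $\mathfrak{a}_{r(s),e_\mu+g_m}$ on the cobordism $Y\times\IR$ then defines a continuation homomorphism $T$ as in \cite{bible}, and the same argument given for Lemma \ref{independence} (using that $c_1(S)$ is torsion, that the perturbation is balanced, and that the underlying cobordism is a cylinder) shows that $T$ is a degree-preserving isomorphism at degree $-k$.

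Next I would compare the action of an instanton's endpoints. If $c$ is a generator of $(SW)_{r,e_\mu+g_m}$ and $c'$ is a component of $Tc$ connected to $c$ by an instanton trajectory $s\mapsto c(s)$ satisfying $\tfrac{dc}{ds}=-\nabla(\mathfrak{a}_{r(s)}+e_\mu+g_m)$, then
\begin{equation*}
\mathfrak{a}_{r',e_\mu+g_m}(c')-\mathfrak{a}_{r,e_\mu+g_m}(c)
=\int_{-\infty}^{+\infty}\Bigl(-\|\nabla(\mathfrak{a}_{r(s)}+e_\mu+g_m)\|^2+\tfrac{\partial \mathfrak{a}_{r(s)}}{\partial r}(c(s))\cdot\tfrac{dr}{ds}\Bigr)\,ds.
\end{equation*}
The first term is non-positive. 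From the explicit formula $\mathfrak{a}=\tfrac{1}{2}(cs-rE)+r\int_Y\langle D_{A_0+a}\psi,\psi\rangle$ one reads off $\tfrac{\partial\mathfrak{a}}{\partial r}=-\tfrac{1}{2}E+\int_Y\langle D_{A_0+a}\psi,\psi\rangle$. Along a finite-energy instanton between configurations of bounded action, Taubes's a priori bounds (cf. Section \ref{estimates from taubes} and \cite{Weinstein}) provide a uniform bound on $E(c(s))$ and on the Dirac pairing, so $|\tfrac{\partial\mathfrak{a}}{\partial r}(c(s))|\le C$ for a constant $C$ depending on $r$ and the fixed data but independent of $s$ and of $m$. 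Consequently
\begin{equation*}
|\mathfrak{a}_{r',e_\mu+g_m}(c')-\mathfrak{a}_{r,e_\mu+g_m}(c)|\le C|r-r'|.
\end{equation*}

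Running the min-max comparison exactly as in Lemma \ref{independence} (choose $\hat{\sigma}_{r,g_m+e_\mu}$ achieving the min-max at $r$, apply $T$, and observe that the image represents the same class), I obtain
\begin{equation*}
\hat{\mathfrak{a}}(r')_{e_\mu+g_m}\le \hat{\mathfrak{a}}(r)_{e_\mu+g_m}+C|r-r'|,
\end{equation*}
and symmetrically the reverse inequality. Letting $m\to\infty$ yields $|\hat{\mathfrak{a}}(r)-\hat{\mathfrak{a}}(r')|\le C|r-r'|$, which proves continuity (in fact local Lipschitz continuity) of $\hat{\mathfrak{a}}(r)$.

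The main obstacle is the uniform bound on $\tfrac{\partial\mathfrak{a}}{\partial r}$ along the instanton trajectory: one must invoke the compactness and a priori energy estimates for Seiberg--Witten instantons on $Y\times\IR$ with $s$-dependent, but uniformly bounded, perturbations to ensure that $E(c(s))$ and the Dirac pairing do not blow up along the trajectory. Once this uniform control is in hand, the Lipschitz estimate and hence continuity follow routinely.
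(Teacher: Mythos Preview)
The paper takes a different and slicker route. It observes that varying $r$ by $\epsilon$ in the Seiberg--Witten equations can be absorbed into a shift of the perturbation one-form $\mu$ by $\tfrac{1}{2}\epsilon\lambda$ (using $*d\lambda=2\lambda$): the equations $(SW)_{r_0+\epsilon,\,e_\mu+e_{\frac{1}{2}\epsilon\lambda}+g_m}$ and $(SW)_{r_0,\,e_\mu+g_m}$ are literally the same. For $|\epsilon|$ small, $g_m+e_{\frac{1}{2}\epsilon\lambda}$ still has $P$-norm $<1/m$, so it can serve as an admissible approximating perturbation at parameter $r_0+\epsilon$, with the \emph{same} min-max action as $g_m$ at $r_0$. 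Continuity in $r$ is thereby reduced \emph{verbatim} to the independence statement of Lemma~\ref{independence}, with no new instanton analysis needed.

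Your approach is more direct and, if it worked, would give the stronger conclusion of local Lipschitz continuity. But the gap you flag is genuine and not easily closed with what is on the page. The a priori bounds in Section~\ref{estimates from taubes} (Lemma~\ref{pre}) are stated for \emph{solutions} of the three-dimensional equations; a slice $c(s)$ of a four-dimensional instanton is not such a solution, so those bounds do not control $E(c(s))$ or $\int_Y\langle D_A\psi,\psi\rangle$ pointwise in $s$. Four-dimensional compactness on $Y\times\IR$ gives $L^2$-type control of curvature and $C^0$ control of $|\psi|$, but upgrading this to a uniform bound on the slicewise three-dimensional energy $E(c(s))=i\int_Y\lambda\wedge da(s)$ is an additional step you have not carried out. (Also, a minor point: from your displayed identity you only get the one-sided inequality $\mathfrak{a}_{r'}(c')-\mathfrak{a}_r(c)\le C|r-r'|$, not the absolute-value bound; only the one-sided version is needed for the min-max comparison, so this is cosmetic.) The paper's trick of trading $r$-variation for $\mu$-variation sidesteps all of this.
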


\begin{proof}
Fix $r_0$, suppose $g_m$ is chosen as in last Subsection for $r_0$. Notice\\

$(SW)_{r_0+\epsilon, e_{\mu}+e_{\frac{1}{2}\epsilon \lambda}+g_m}$ is the same equation as $(SW)_{r_0, e_{\mu}+g_m}$ for any $\epsilon \in \IR$, and when $|\epsilon|$ is small (say, when $0\leq |\epsilon|<\delta(r_0,m)$), $\|g_m+e_{\frac{1}{2}\epsilon\lambda}\|_P<\dfrac{1}{m}$ still holds true. Thus $g_m+e_{\frac{1}{2}\epsilon\lambda}$ can also play the role of ``$g_m$" with $r=r_0+\epsilon$ when $\epsilon$ is small, and they have the same action on min-max generators. Thus, as long as $|\epsilon|<\delta(r_0,m)$ , $|\hat{\mathfrak{a}}(r_0)_{g_m+e_{\mu}}-\hat{\mathfrak{a}}(r_0+\epsilon)| \leq \dfrac{C_{r_0+\epsilon}}{m}$.   Moreover, remember $|\hat{\mathfrak{a}}(r_0)_{g_m+e_{\mu}}-\hat{\mathfrak{a}}(r_0)|  \leq \dfrac{C_{r_0}}{m}$, 
thus $|\hat{\mathfrak{a}}(r_0)-\hat{\mathfrak{a}}(r_0+\epsilon)|\leq \dfrac{C_{r_0}+C_{r_0+\epsilon}}{m}$. \\
Notice $C_{r}$ is bounded nearby $r_0$ (for $|\epsilon|<\delta(r_0,m)$), so $m \rightarrow \infty$ implies $\dfrac{C_{r_0}+C_{r_0+\epsilon}}{m}$ can be arbitrarily small, which implies $\hat{\mathfrak{a}}$ is continuous.\\

\end{proof}

It is always possible to construct $\hat{c}(r)$ in $(SW)_{r,e_{\mu}}$, if it is only required to have an action equal to $\hat{\mathfrak{a}}(r)$. This is the following lemma:\\

\begin{lemma}\label{min max generator in section 5}
For each $r$, one can choose a solution of $(SW)_{r,e_{\mu}}$, denoted by $\hat{c}(r)_{\mu}$(or $\hat{c}(r)$, $\hat{c}$ for short) , such that $\mathfrak{a}_{r,e_{\mu}}(\hat{c}(r)) = \hat{\mathfrak{a}}(r) $. (The way to choose may not be unique.) $\hat{c}(r)$ is called the \textbf{min-max generator}.\\
\end{lemma}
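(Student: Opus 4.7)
The plan is to realize $\hat{c}(r)$ as a subsequential gauge-limit of approximate min-max generators as the auxiliary perturbation $g_m\in P$ of Definition \ref{definition of a(r)} shrinks to zero. By construction, $\hat{\mathfrak{a}}(r)=\lim_{m\to\infty}\hat{\mathfrak{a}}(r)_{e_\mu+g_m}$, and for each generic $g_m$ the perturbed theory admits genuine Seiberg--Witten solutions whose actions can be made to approximate $\hat{\mathfrak{a}}(r)_{e_\mu+g_m}$ arbitrarily well. Pushing such a sequence through a compactness argument should yield a configuration that solves $(SW)_{r,e_\mu}$ and has action exactly $\hat{\mathfrak{a}}(r)$.

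Concretely, for each $m$ I would unwind the min-max definition: choose a cycle $\sigma_m$ representing $\{\sigma\}$ in the complex of $(SW)_{r,e_\mu+g_m}$ whose maximum generator-action is within $1/m$ of $\hat{\mathfrak{a}}(r)_{e_\mu+g_m}$, and then pick a generator $c_m$ of $\sigma_m$ that achieves that maximum. Thus each $c_m$ is a bona fide solution of $(SW)_{r,e_\mu+g_m}$, and $\mathfrak{a}_{r,e_\mu+g_m}(c_m)\to\hat{\mathfrak{a}}(r)$ as $m\to\infty$.

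Next I would invoke the standard a priori and compactness results for perturbed Seiberg--Witten equations from Chapters 5 and 10 of \cite{bible}. Because $r$ is fixed and $\|g_m\|_P\to 0$, the $C^\infty$ bounds on solutions modulo gauge are uniform in $m$; after passing to a subsequence and choosing gauges, the $c_m$ converge smoothly (modulo gauge) to a configuration $\hat{c}$. Since the perturbation contributions $\mathfrak{T}$ and $\mathfrak{S}$ associated to $g_m$ tend to zero in every $C^k$-norm by $P$-tameness, the smooth limit $\hat{c}$ satisfies the unperturbed equation $(SW)_{r,e_\mu}$. Continuity of $\mathfrak{a}_{r,e_\mu}$ under smooth convergence, combined with the bound $|g_m(c_m)|\to 0$ (coming from $\|g_m\|_P<1/m$ and the uniform boundedness of $c_m$), gives $\mathfrak{a}_{r,e_\mu}(\hat{c})=\lim_m\mathfrak{a}_{r,e_\mu+g_m}(c_m)=\hat{\mathfrak{a}}(r)$, so $\hat{c}(r)_\mu:=\hat{c}$ is the desired min-max generator.

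The step I expect to be the main obstacle is the compactness extraction: one must verify that possible alternation of the $c_m$ between reducible and irreducible configurations does not obstruct smooth convergence modulo gauge, and that the blown-up configuration space used in \cite{bible} to build the complex does not interfere when we project back to an honest (possibly reducible) solution on $Y$. Once the action and perturbation bounds are in place, this is handled by the standard tame-perturbation compactness theory, but it is the step that demands the most care.
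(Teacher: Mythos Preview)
Your proposal is correct and follows essentially the same route as the paper: pick for each $m$ a (near) min-max generator $c_m$ solving $(SW)_{r,e_\mu+g_m}$, invoke the standard Seiberg--Witten compactness from \cite{bible} to pass to a subsequential gauge-limit, and observe that the limit solves $(SW)_{r,e_\mu}$ with action $\hat{\mathfrak{a}}(r)$. The paper's proof is simply a two-line version of this, taking the exact min-max generators $\hat{c}_{r,g_m+e_\mu}$ already introduced in the proof of Lemma~\ref{independence} and appealing directly to compactness; your extra care about reducibles and the blow-down is reasonable but not singled out in the paper.
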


\begin{proof}
By a standard compactness argument of Seiberg-Witten equation, $\hat{c}_{r,g_m+e_{\mu}}$ has a convergent subsequence (modulo gauge equivalence)(see \cite{bible}). Just simply choose a limit of such subsequence.\\
\end{proof}

Moreover, we have the following theorem:

\begin{theorem}
$\hat{c}(r)$ satisfies the formula (\ref{buchong}), i.e.,

$$\lim\limits_{r\in U, r\rightarrow \infty} |E(\hat{c}_T(r))-E(\hat{c}(r))| = 0. $$ 

Here, $U, \hat{c}_T (r)$ has the same meaning as in (4) of property \ref{min-max}.\\
\end{theorem}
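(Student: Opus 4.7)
The plan is to compare the min-max action values attached to the two constructions and convert action closeness into energy closeness via the differential identity of Property \ref{min-max}(2). Concretely, Taubes's $\hat{c}_T(r)$ in part (d) of Section 3 of \cite{Weinstein} is defined using an extra $r$-dependent perturbation $\tilde g_r \in P$ whose $P$-norm tends to zero as $r \to \infty$. So the first step is to verify (or extract from Taubes's construction) this decay $\|\tilde g_r\|_P \to 0$ and to note that $\hat{c}_T(r)$ is an honest min-max critical point of $\mathfrak{a}_r + e_\mu + \tilde g_r$.

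Next I would run exactly the path-comparison argument from the proof of Lemma \ref{independence}, but now connecting our perturbation $g_m + e_\mu$ at $s < -1$ to Taubes's $\tilde g_r + e_\mu$ at $s > 1$ via a generic path $g(s) \in P$ with $\|\tfrac{dg}{ds}\|_P \leq C(\tfrac{1}{m} + \|\tilde g_r\|_P)$. The same instanton/continuation map calculation produces
$$|\hat{\mathfrak{a}}(r)_{g_m + e_\mu} - \mathfrak{a}_{r, \tilde g_r + e_\mu}(\hat{c}_T(r))| \leq C_r\left(\tfrac{1}{m} + \|\tilde g_r\|_P\right).$$
Passing $m \to \infty$ and using $\|\tilde g_r\|_P \to 0$ gives $|\hat{\mathfrak{a}}(r) - \mathfrak{a}(\hat{c}_T(r))| \to 0$ on $U$ as $r \to \infty$.

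Now I convert this into an energy comparison. Property \ref{min-max}(2) gives $\tfrac{d}{dr}\hat{\mathfrak{a}}(r) = -\tfrac{1}{2}\hat E(r)$ piecewise, and the analogous identity $\tfrac{d}{dr}\mathfrak{a}(\hat{c}_T(r)) = -\tfrac{1}{2} E(\hat{c}_T(r))$ is standard for Taubes's construction on $U$. Integrating the difference over $[r, r']$ and using the action-closeness bound yields
\begin{equation*}
\Bigl|\int_r^{r'} \bigl(E(\hat{c}_T(s)) - \hat E(s)\bigr)\,ds\Bigr| \longrightarrow 0
\end{equation*}
as $r \to \infty$ with $r, r' \in U$. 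Lemma \ref{pre}(1) gives the uniform bound $E \leq s\,\mathrm{Vol}(Y) + C$, so the integrand is controlled; moreover $\hat E(s)$ is piecewise continuous and so is $E(\hat{c}_T(s))$ on $U$.

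The real obstacle is upgrading this integral estimate to the pointwise statement (\ref{buchong}). The strategy is to combine standard compactness for solutions of perturbed Seiberg-Witten equations (modulo gauge, as used in the proof of Lemma \ref{min max generator in section 5}) with the a priori controls of Lemma \ref{pre}. Along any sequence $r_j \to \infty$ in $U$, one extracts simultaneous limits of $\hat{c}(r_j)$ and $\hat{c}_T(r_j)$, observes that both limiting configurations inherit the same limiting action (by the action-closeness step), and argues that the two limits solve the same unperturbed equation (since all perturbations decay). Uniqueness of $E$ in such a limit, together with the integral closeness above, rules out persistent pointwise gaps and forces $|E(\hat{c}_T(r)) - E(\hat{c}(r))| \to 0$, which is (\ref{buchong}).
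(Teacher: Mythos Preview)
Your first step---comparing the two min-max actions via the continuation argument of Lemma~\ref{independence}---is the right opening move and matches the paper. One caveat: the bound you obtain is $C_r\|\tilde g_r\|_P$, and you have not controlled how $C_r$ grows with $r$, so your claim that the action difference tends to $0$ is not justified as stated. The paper handles this by choosing the perturbation $p(r)$ with $\|p(r)\|_P < 1/(\lceil C_r\rceil+1)$, which yields only $|\hat{\mathfrak a}_T(r)-\hat{\mathfrak a}(r)|\leq 1$, i.e.\ boundedness rather than decay. That turns out to be enough.

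The real problem is your second step. Passing from the integral bound
\[
\Bigl|\int_r^{r'}\bigl(E(\hat c_T(s))-\hat E(s)\bigr)\,ds\Bigr|\to 0
\]
to the pointwise statement~(\ref{buchong}) is where your argument breaks. The compactness sketch you give does not work: as $r\to\infty$ the equations themselves degenerate (there is no limiting Seiberg--Witten equation to which both sequences converge in a way that lets you read off $E$), and ``same limiting action implies same $E$'' is simply false in general---two solutions with equal action can have different energies. So the final paragraph is a genuine gap, not just a detail to fill in.

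The paper avoids this entirely by using the \emph{algebraic} relation between action and energy rather than the differential one. For a solution one has $\mathfrak a=\tfrac12(cs-rE)+e_\mu+g$, and the estimate~(\ref{cse}) gives $|cs+2e_\mu+2g|\leq Cr^{2/3}E^{4/3}$; combining these,
\[
E \;=\; -\frac{2\mathfrak a}{r} \;+\; O\bigl(r^{-1/3}E^{4/3}\bigr).
\]
Since $E(\hat c(r))$ is bounded (Theorem~\ref{main theorem}) and $E(\hat c_T(r))$ is bounded (Taubes), the $O(\cdot)$ terms vanish as $r\to\infty$. And since the action difference is bounded by $1$, dividing by $r$ gives $\tfrac{2}{r}|\hat{\mathfrak a}_T(r)-\hat{\mathfrak a}(r)|\to 0$. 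Subtracting the two displayed identities yields $|E(\hat c_T(r))-E(\hat c(r))|\to 0$ directly, with no integration and no compactness needed.
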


\begin{proof}
For $r \in U$, the Seiberg-Witten equations for $\hat{c}_T(r)$ and $\hat{c}(r)$ differ by only a small normed $r$-dependent tame perturbation, represented by $p(r)\in P$ (see part (d) of Section 3 in \cite{Weinstein} for details). Moreover, $p(r)$ can be chosen so that $\|p(r)\|_P < \dfrac{1}{\lceil C_r \rceil + 1}$, where $C_r$ is defined in the proof of theorem \ref{independence}, $\lceil C_r \rceil$ is the smallest integer above $C_r$. Thus, fix an $r$, $p(r)$ can play the role of $g_m$ in definition \ref{definition of a(r)} with $m=\lceil C_r \rceil $. Since $\hat{c}_T(r)$ is an actual min-max component of the homology class $\{\sigma\}$ (see \cite{Weinstein}), so by the proof of theorem \ref{independence}, $|\hat{a}_T(r)-\hat{a}(r)| \leq \dfrac{C_r}{\lceil C_r \rceil} \leq 1$.\\

From the inequalities in Section \ref{estimates from taubes} and the definition of actions, it is not hard to see, in any case

$$\dfrac{-2 a}{r} = E+ O(r^{-\frac{1}{3}}E^{\frac{4}{3}}).$$

Since $E(\hat{c}(r))$ is bounded by theorem \ref{main theorem}, so

$$ \lim\limits_{r\in U, r\rightarrow \infty} |E(\hat{c}(r))+\dfrac{2\hat{a}(r)}{r}| = 0.$$

Similarly, $E(\hat{c}_T(r))$ is also bounded (see \cite{Weinstein}), so

$$ \lim\limits_{r\in U, r\rightarrow \infty} |E(\hat{c}_T(r))+\dfrac{2\hat{a}_T(r)}{r}| = 0. $$

Together with $|\hat{a}_T(r)-\hat{a}(r)| \leq \dfrac{C_r}{\lceil C_r \rceil} \leq 1$, one gets

$$\lim\limits_{r\in U, r\rightarrow \infty} |E(\hat{c}_T(r))-E(\hat{c}(r))| = 0. $$ \\

\end{proof}

Notice, the $\hat{c}(r)$ constructed above might not be piecewise continuous when $r>r_k$. The $r$-dependent choices are made in the next section (after choosing generic $\mu$) so that the resulting family (parametrized by r) is piecewise continuous when $r>r_k$.\\

\subsection{Piecewise continuity of $\hat{c}(r)$ for generic $\mu$}\label{piecewise continuity}~\\

Reference \cite{Weinstein} proved that if $\mu$ is generic, then there is a discrete subset in $[2, \infty)$, denoted by $\{p_1, p_2, \cdots\}$, with the following significance: If $r$ is not in this set, then the irreducible solutions of $(SW)_{r,e_{\mu}}$ are distinguished by the values of their actions. Second, for any $i \in \{1, 2,\cdots\}$, the irreducible solutions of $(SW)_{r,e_{\mu}}$ for values of r in the interval $(p_i ,p_{i+1})$ can be identified so as to define continuous and piecewise differentiable families of configurations.\\

Now choose a family $\hat{c}(r)$ in the manner explained previously. This defines the
number $r_k$ as in Definition $\ref{rk}$. If $r > r_k$ and if $r$ is in some interval $(p_i, p_{i+1})$ for $i\in \{1, 2,\cdots\} $, then, because the min-max action $\hat{\mathfrak{a}}$ varies continuously, it follows from the remarks of the preceding paragraph that $\hat{c}(r)$ will vary continuously and piecewise differentiably with $r$ for $r \in (p_i, p_{i+1})$.\\

With the preceding understood, consider next:\\

\begin{lemma}
When $r\in (p_i,p_{i+1})$ and when $r>r_k$, then
$$\dfrac{d\hat{\mathfrak{a}}(r)}{dr} = -\dfrac{1}{2}E(\hat{c}).$$~\\
\end{lemma}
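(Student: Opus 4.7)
The plan is a standard envelope-theorem argument. I view $\mathfrak{a}_{r,e_{\mu}}(c)$ as a function of the two arguments $(r,c)$ and apply the chain rule to the composite
$$\hat{\mathfrak{a}}(r)=\mathfrak{a}_{r,e_{\mu}}(\hat{c}(r)).$$
By the remarks in the preceding subsection (coming from \cite{Weinstein}), as $r$ varies in the interval $(p_i,p_{i+1})$ with $r>r_{k}$, the irreducible solutions of $(SW)_{r,e_{\mu}}$ form a smooth family distinguishable by their actions, and since $\hat{\mathfrak{a}}$ is continuous (Theorem \ref{5.4}), the selection $\hat{c}(r)$ must agree with a single continuous and piecewise differentiable branch of this family on $(p_{i},p_{i+1})$. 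Thus $\dot{\hat{c}}(r)$ makes sense as a piecewise continuous tangent vector.

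With that in hand I differentiate:
$$\frac{d\hat{\mathfrak{a}}(r)}{dr}=\frac{\partial \mathfrak{a}_{r,e_{\mu}}}{\partial r}\bigg|_{c=\hat{c}(r)}+\bigl\langle\nabla \mathfrak{a}_{r,e_{\mu}}(\hat{c}(r)),\,\dot{\hat{c}}(r)\bigr\rangle.$$
The second term vanishes because the defining property of a solution of $(SW)_{r,e_{\mu}}$ is precisely $\nabla \mathfrak{a}_{r,e_{\mu}}(\hat{c}(r))=0$. For the explicit partial derivative, inspect the definition of $\mathfrak{a}_{r,e_{\mu}}=\frac{1}{2}(cs-rE)+r\int_{Y}\langle D_{A_{0}+2a}\psi,\psi\rangle+e_{\mu}$: the quantities $cs$, $E$, and $e_{\mu}$ depend only on the configuration $c$, so the partial in $r$ at fixed $c$ equals
$$-\tfrac{1}{2}E(c)+\int_{Y}\langle D_{A_{0}+2a}\psi,\psi\rangle.$$
Evaluated at the solution $\hat{c}(r)$, the Dirac equation (\ref{sw2}) (with $g=0$) reads $2rD_{A_{0}+2a}\psi=0$, so the Dirac integral vanishes, leaving $-\tfrac{1}{2}\hat{E}(r)$ as desired.

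The only real subtlety is the envelope-theorem step, namely that the chosen $\hat{c}(r)$ actually follows a differentiable branch. This is precisely what the two paragraphs immediately preceding the lemma record: the discrete set $\{p_{i}\}$ is chosen in \cite{Weinstein} so that, away from it, irreducible solutions for nearby $r$ are unambiguously identified, and continuity of the action $\hat{\mathfrak{a}}$ forces the min-max choice to trace a single branch on each $(p_{i},p_{i+1})$. This is the place where genericity of $\mu$ and the hypothesis $r>r_{k}$ (so that $\hat{c}(r)$ is irreducible and the spectral-flow/transversality arguments of \cite{Weinstein} apply) are used; everything else is first-year calculus.
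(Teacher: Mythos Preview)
Your argument is correct and is precisely the paper's proof: both apply the chain rule to $\hat{\mathfrak{a}}(r)=\mathfrak{a}_{r,e_{\mu}}(\hat{c}(r))$, drop the $\langle\nabla\mathfrak{a}_{r,e_{\mu}},\dot{\hat c}\rangle$ term because $\hat c(r)$ is a critical point, and identify the explicit $r$-partial as $-\tfrac{1}{2}E$. Your write-up is in fact a bit more explicit than the paper's one-line version, since you spell out that the Dirac integrand in $\partial_r\mathfrak{a}$ vanishes on solutions via (\ref{sw2}); the paper leaves that implicit.
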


\begin{proof}
This is just because $\hat{c}(r)$ are continuous solutions, thus 
$$\dfrac{d\hat{\mathfrak{a}}(r)}{dr}=(\dfrac{d}{dr}\mathfrak{a}_{r,e_{\mu}})(\hat{c}(r))+<\nabla \mathfrak{a}_{r,e_{\mu}},\dfrac{d}{dr}\hat{c}(r)>= -\dfrac{1}{2}E(\hat{c}(r)).$$~\\
\end{proof}

The proof is almost done. Only the property (3) of  theorem \ref{min-max} needs to be checked. But this property is just a corollary of (4) in definition \ref{done}.\\

~\\

~\\

~\\

~\\

\section{Acknowledgement}~\\

This paper owes much to Professor Cliff Taubes who introduced to me the paper by
Cristofaro-Gardiner, Hutchings and Ramos \cite{huchings} and subsequently gave me much
valuable advice. I am also indebted to Professor Peter Kronheimer and to Boyu
Zhang for helping me with some details about Seiberg-Witten theory. Personally, I am also thankful to the book \cite{bible}, which was a convenient, super encyclopedic reference for almost everything about Seiberg-Witten Floer homology, and Donghao Wang, who first introduced the book \cite{bible} to me. \\

~\\

~\\

~\\

~\\

~\\

~\\

~\\

\bibliography{123}
	\bibliographystyle{plain}

\end{document}